\newtheorem{theorem}{Theorem}
\newtheorem{definition}[theorem]{Definition}
\theoremstyle{definition}
\newtheorem{example}[theorem]{Example}
\begin{document}
\title{Tauberian theorems for weighted means of double sequences in intuitionistic fuzzy normed spaces}
\author{Lakshmi Narayan Mishra$^1$\footnote{Corresponding author. E-mail: lakshminarayanmishra04@gmail.com}\, Mohd. Raiz$^2$, Vishnu Narayan Mishra$^2$}

\date{{\small
		$^1$Department of Mathematics, School of Advanced Sciences,\\ Vellore Institute of Technology (VIT) University,\\ Vellore 632 014, Tamil Nadu, India\\ Email: lakshminarayanmishra04@gmail.com\\
		$^2$Department of Mathematics,\\ Indira Gandhi National Tribal University,\\Lalpur, Amarkantak, Anuppur, Madhya Pradesh 484 887, India\\ Email: mohdraizp@gmail.com, vnm@igntu.ac.in, vishnunarayanmishra@gmail.com\\}}
\maketitle

\begin{abstract}
We define weighted mean summability method of double sequences in intuitionistic fuzzy normed spaces($IFNS$), and obtain necessary and sufficient Tauberian conditions under which convergence of double sequences in $IFNS$ follows from their weighted mean summability. This study reveals also Tauberian results for some known summation methods in the special cases.
\end{abstract}

\section{Introduction}
Zadeh\cite{zadeh} introduced the concept of fuzzy sets by assigning membership grades to each element in the universe of discourse in order to handle noncategorical and unclassifiable data. Following its introduction, fuzzy sets were studied by many mathematicians and applied to many fields of science. Besides, Atanassov\cite{atanassov} introduced intuitionistic fuzzy sets(IFS) by also assigning nonmembership grades to the elements. IFS were also studied by many mathematicians. The concept of intuitionistic fuzzy norm was studied by Saadati and Park\cite{ifnorm}, and by Lael\cite{lael}. Convergence of sequences in $IFNS$ was studied and some convergence and summation methods were introduced to recover the convergence where ordinary convergence of sequences in $IFNS$ fails\cite{karakus,mursaleen,taloyavuz,yavuz}. In this study, we extend the results of \cite{chen} to intuitionistic fuzzy normed spaces. That is, we define weighted mean summability methods for double sequences in $IFNS$ and give some Tauberian conditions under which convergence of double sequences in $IFNS$ follows from weighted mean summability. This study also reveals Tauberian results for some known summation methods such as Ces\`{a}ro summability method $(C,1,1)$ and N\"{o}rlund summability method $(\bar{N},p)$ in the special cases. We now give some preliminaries for $IFNS$.
\begin{definition}\cite{lael}
The triplicate $(V,\mu,\nu)$ is said to be an $IF-$normed space if $V$ is a real vector space, and $\mu,\nu$ are $F-$sets on $V\times\mathbb{R}$ satisfying the following conditions for every $x,y\in V$ and $t,s\in\mathbb{R}$:
\begin{enumerate}[label=(\alph*)]
\item $\mu(x,t)=0$ for all non-positive real number t,
\item $\mu(x,t)=1$ for all $t\in\mathbb{R^+}$ if and only if $x=\theta$
\item $\mu(cx,t)=\mu\left(x,\frac{t}{|c|}\right)$ for all $t\in\mathbb{R^+}$ and $c\neq0$,
\item $\mu(x+y,t+s)\geq \min\{\mu(x,t),\mu(y,s)\}$,
\item $\lim_{t\to\infty}\mu(x,t)=1$ and $\lim_{t\to0}\mu(x,t)=0$,

\item $\nu(x,t)=1$ for all non-positive real number t,
\item $\nu(x,t)=0$ for all $t\in\mathbb{R^+}$ if and only if $x=\theta$
\item $\nu(cx,t)=\nu\left(x,\frac{t}{|c|}\right)$ for all $t\in\mathbb{R^+}$ and $c\neq0$,
\item $\max\{\nu(x,t),\nu(y,s)\}\geq\nu(x+y,t+s)$,
\item $\lim_{t\to\infty}\nu(x,t)=0$ and $\lim_{t\to0}\nu(x,t)=1$.
\end{enumerate}
In this case, we will call $(\mu,\nu)$ an $IF-$norm on $V$.
\end{definition}
Throughout the paper $(V,\mu,\nu)$ will denote an $IF-$normed space.
\begin{example}\label{standartnorm}
Let $(V, \Vert\cdot\Vert)$ be a normed space and $\mu_0$, $\nu_0$ be $F-$sets on $V\times\mathbb{R}$ defined by
\begin{eqnarray*}
\mu_0(x,t)=
\begin{cases}
0, \quad &t\leq0,\\
\frac{t}{t+\Vert x\Vert}, &t>0,
\end{cases}
\hspace{2cm}
\nu_0(x,t)=
\begin{cases}
1, \quad &t\leq0,\\
\frac{\Vert x\Vert}{t+\Vert x\Vert}, &t>0.
\end{cases}
\end{eqnarray*}
Then $(\mu_0, \nu_0)$ is $IF-$norm on $V$.
\end{example}
\begin{definition}\label{convergence}\cite{ifnorm}
A double sequence $(x_{mn})$ in $(V,\mu,\nu)$ is said to be convergent to $x\in V$ and denoted by $x_{mn}\to x$, if for each $t>0$ and each $\varepsilon \in(0,1)$ there exists $n_0\in \mathbb{N}$ such that
\begin{equation*}
    \mu(x_{mn}-x,t)>1-\varepsilon \quad \textrm{and} \quad \nu(x_{mn}-x,t)<\varepsilon
\end{equation*}
for all $m,n\geq n_0$.
\end{definition}
Here we note that convergence of double sequences in $IFNS$ is meant in the sense in Definition \ref{convergence} throughout the paper. Similarly, limit and convergence of double sequences of real numbers are meant in the Pringsheim's sense\cite{pringsheim}.
\begin{definition}\cite{ifnorm}
A double sequence $(x_{mn})$ in $(V,\mu,\nu)$ is said to be Cauchy if for each $t>0$ and each $\varepsilon \in(0,1)$ there exists $n_0\in \mathbb{N}$ such that
\begin{equation*}
    \mu(x_{jk}-x_{mn},t)>1-\varepsilon \quad \textrm{and} \quad \nu(x_{jk}-x_{mn},t)<\varepsilon
\end{equation*}
for all $i,j,m,n\geq n_0$.
\end{definition}
We note that if sequence $(x_{mn})$ converges to $x\in V$, then $(x_{mn})$ is Cauchy in view of the facts that
\begin{eqnarray*}
\mu(x_{jk}-x_{mn},t)\geq\min\{\mu(x_{jk}-x,t/2),\mu(x_{mn}-x,t/2)\}\\ \nu(x_{jk}-x_{mn},t)\leq\max\{\nu(x_{jk}-x,t/2),\nu(x_{mn}-x,t/2)\}.
\end{eqnarray*}
\begin{definition}\label{q-bounded}\cite{qbounded}
A double sequence $(x_{mn})$ in $(V,\mu,\nu)$ is called q-bounded if $\lim\limits_{t\rightarrow\infty}\inf\limits_{m,n}\mu(x_{mn},t)=1$ and $\lim\limits_{t\rightarrow\infty}\sup\limits_{m,n}\nu(x_{mn},t)=0$.
\end{definition}
\begin{theorem}\label{boundedtoslowly}\cite{taloyavuz}
Let $(x_n)$ be a sequence in $(V,\mu,\nu)$. If $\{n(x_{n}-x_{n-1})\}$ is q-bounded, then $(x_n)$ is slowly oscillating.
\end{theorem}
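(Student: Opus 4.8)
The plan is to exploit the telescoping representation $x_k-x_n=\sum_{j=n+1}^{k}\frac{1}{j}\,y_j$, where $y_j:=j(x_j-x_{j-1})$, and to feed the q-boundedness of $(y_j)$ into the intuitionistic fuzzy triangle inequalities (d) and (i) through a carefully chosen splitting of the norm parameter $t$. Before starting, I would record the monotonicity of the IF-norm in its second argument: taking $y=\theta$ in (d) and using (b) gives $\mu(x,t+s)\ge\min\{\mu(x,t),\mu(\theta,s)\}=\mu(x,t)$ for $s>0$, so $\mu(x,\cdot)$ is nondecreasing; dually, (i) together with (g) shows $\nu(x,\cdot)$ is nonincreasing. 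These facts let me compare the norm at different parameter values at the end.

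Next I would unwind q-boundedness uniformly. In the single-sequence reading of Definition \ref{q-bounded}, the hypothesis $\lim_{t\to\infty}\inf_j\mu(y_j,t)=1$ and $\lim_{t\to\infty}\sup_j\nu(y_j,t)=0$ means that, given $\varepsilon\in(0,1)$, there is a threshold $T>0$ with $\mu(y_j,T)>1-\varepsilon$ and $\nu(y_j,T)<\varepsilon$ \emph{for every} index $j$ simultaneously. The key technical idea is then to distribute the parameter across the telescoped sum by the weights $s_j:=T/j$, so that homogeneity (c) converts each summand back to the uniform level $T$:
\begin{equation*}
\mu\!\left(\tfrac{1}{j}y_j,\,\tfrac{T}{j}\right)=\mu(y_j,T)>1-\varepsilon,\qquad \nu\!\left(\tfrac{1}{j}y_j,\,\tfrac{T}{j}\right)=\nu(y_j,T)<\varepsilon .
\end{equation*}
Applying the iterated forms of (d) and (i) to $x_k-x_n=\sum_{j=n+1}^{k}\frac{1}{j}y_j$ with this splitting yields
\begin{equation*}
\mu\!\left(x_k-x_n,\;T\sum_{j=n+1}^{k}\tfrac{1}{j}\right)>1-\varepsilon,\qquad
\nu\!\left(x_k-x_n,\;T\sum_{j=n+1}^{k}\tfrac{1}{j}\right)<\varepsilon .
\end{equation*}

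Finally I would convert the harmonic budget into a choice of $\lambda$. Using $\sum_{j=n+1}^{k}\frac1j\le\ln(k/n)\le\ln\lambda$ whenever $n<k\le\lambda n$, and choosing $\lambda>1$ small enough that $T\ln\lambda\le t$ (possible since $t,T>0$ force $e^{t/T}>1$), the monotonicity recorded at the outset upgrades the displays above to $\mu(x_k-x_n,t)>1-\varepsilon$ and $\nu(x_k-x_n,t)<\varepsilon$ for all $n$ and all $k$ with $n<k\le\lambda n$. Since $\varepsilon$ and $t$ were arbitrary, this is exactly slow oscillation (and it even holds uniformly in $n$, so no threshold $n_0$ is needed). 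I expect the only real obstacle to be the heart of the argument, namely recognizing that $s_j=T/j$ is the correct splitting: it is precisely this choice that trades the index-uniform q-bound at the fixed level $T$ for a total parameter cost growing only like $T\ln(k/n)$, which can then be absorbed into any prescribed $t$ by pushing $\lambda$ toward $1$. The remaining ingredients — the monotonicity, the homogeneity rescaling, and the harmonic estimate — are routine.
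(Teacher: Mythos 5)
Your argument is correct, and it is essentially the approach the paper itself uses for the weighted analogue (Theorem \ref{(1,0)}): telescope $x_k-x_n$, use homogeneity (c) and the iterated triangle inequalities (d), (i) to reduce each increment to the uniform q-boundedness threshold $T$, and then absorb the total parameter cost by taking $\lambda$ close to $1$ --- note the paper states Theorem \ref{boundedtoslowly} without proof, citing \cite{taloyavuz}. The only cosmetic difference is your harmonic splitting $s_j=T/j$ (total cost $T\ln\lambda$, so $\lambda\le e^{t/T}$) versus the paper's equal/weight-proportional splitting in Theorem \ref{(1,0)} (each share bounded below by $t/(\lambda^{\rho}-1)$, so $\lambda^{\rho}<1+t/M_\varepsilon$); both are valid and yield the same uniform-in-$n$ conclusion.
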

Let $p=(p_j)$ and $q=(q_k)$ be two sequences of nonnegative numbers($p_0,q_0>0$) with
\begin{eqnarray*}
P_m=\sum_{j=0}^{m}p_j\to\infty\  (m\to\infty),\qquad Q_n=\sum_{k=0}^{n}q_k\to\infty\  (n\to\infty).
\end{eqnarray*}
Let $(\alpha,\beta)\in\{(1,1),(1,0),(0,1)\}$. The weighted means $(t^{\alpha\beta}_{mn})$ of a double sequence $(x_{mn})$ in $IF-$normed space $(V,\mu,\nu)$ are defined by
\begin{eqnarray*}
t^{11}_{mn}=\frac{1}{P_mQ_n}\sum_{j=0}^{m}\sum_{k=0}^{n}p_jq_kx_{jk},\quad t^{10}_{mn}=\frac{1}{P_m}\sum_{j=0}^{m}p_jx_{jn},\quad t^{01}_{mn}=\frac{1}{Q_n}\sum_{k=0}^{n}q_kx_{mk}.
\end{eqnarray*}
The double sequence $(x_{mn})$ in $(V,\mu,\nu)$ is said to be $(\bar{N},p,q;\alpha,\beta)$ summable to $x\in V$ if $\lim_{m,n\to\infty}t^{\alpha\beta}_{mn}=x$, and denoted by $x_{mn}\to x\ (\bar{N},p,q;\alpha,\beta)$.

Since $t^{10}_{mn}$ is independent of $q$, we will write $(\bar{N},p,*;1,0)$ in place of $(\bar{N},p,q;1,0)$(see \cite{chen}). We note that if we take $p_j=1,q_k=1$ for all $j,k\in\mathbb{N}$ in $(\bar{N},p,q;1,1)$ summability, then we obtain $(C,1,1)$ summability. Also if we take $(x_{jk})=(x_{j})$ in $(\bar{N},p,*;1,0)$ summability, then we obtain $(\bar{N},p)$ summability of single sequences. Hence, this paper reveals also Tauberian results for $(C,1,1)$ and $(\bar{N},p)$ summability methods in special cases. Here we note that when the case is $(x_{jk})=(x_{j})$ in $(\bar{N},p,*;1,0)$ summability, the condition of q-boundedness in Theorem \ref{regular1} will be redundant.
\section{Results for $(\bar{N},p,q;1,1)$ summability in $IFNS$}
In this section we give some Tauberian conditions under which $(\bar{N},p,q;1,1)$ summability implies convergence in $IFNS$. Before to give our Tauberian results, we first show that convergence and q-boundedness imply $(\bar{N},p,q;1,1)$ summability for double sequences in $IFNS$.
\begin{theorem}\label{regular}
If double sequence $(x_{mn})$ in $(V,\mu,\nu)$ is q-bounded and convergent to $x\in V$, then $(x_{mn})$ is $(\bar{N},p,q;1,1)$ summable to $x$.
\end{theorem}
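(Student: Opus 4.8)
The plan is to reduce the statement to a regularity argument for a null double sequence and then carry it out with the IF-norm axioms. Since $\frac{1}{P_mQ_n}\sum_{j=0}^m\sum_{k=0}^n p_jq_k=1$, writing $y_{jk}=x_{jk}-x$ gives $t^{11}_{mn}-x=\frac{1}{P_mQ_n}\sum_{j=0}^m\sum_{k=0}^n p_jq_k\,y_{jk}$, where $y_{jk}\to\theta$ and $(y_{jk})$ inherits q-boundedness from $(x_{mn})$ (as $x$ is fixed). Thus it suffices to show that the weighted mean of a q-bounded null double sequence tends to $\theta$. Fixing $t>0$ and $\varepsilon\in(0,1)$, I would set $\delta=t/4$ and $\eta=\varepsilon$, use convergence to pick $N$ with $\mu(y_{jk},\delta)>1-\eta$ and $\nu(y_{jk},\delta)<\eta$ for all $j,k\ge N$, and use q-boundedness (Definition \ref{q-bounded}) to pick $T>0$ with $\mu(y_{jk},T)>1-\eta$ and $\nu(y_{jk},T)<\eta$ for all $j,k$.

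The crucial device, which makes the triangle inequalities (d) and (i) usable despite the growing number of summands, is homogeneity (c), (h): for $c_{jk}=p_jq_k/(P_mQ_n)>0$ one has $\mu(c_{jk}y_{jk},c_{jk}s)=\mu(y_{jk},s)$ and $\nu(c_{jk}y_{jk},c_{jk}s)=\nu(y_{jk},s)$. Hence, over any index set $E$, distributing the budget proportionally yields $\mu\big(\sum_{(j,k)\in E}c_{jk}y_{jk},\sum_{(j,k)\in E}c_{jk}s\big)\ge\min_{(j,k)\in E}\mu(y_{jk},s)$ and the dual bound $\nu\big(\sum_{(j,k)\in E}c_{jk}y_{jk},\sum_{(j,k)\in E}c_{jk}s\big)\le\max_{(j,k)\in E}\nu(y_{jk},s)$, with right-hand sides independent of $|E|$. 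Monotonicity of $\mu$ and $\nu$ in the second variable (from (d), (i) applied with $y=\theta$, using (b) and (g)) then lets me replace the budget $\sum_E c_{jk}s$ by any larger value.

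With these tools I would split the double sum over the four blocks $\{j<N,k<N\}$, $\{j<N,k\ge N\}$, $\{j\ge N,k<N\}$, $\{j\ge N,k\ge N\}$ and estimate each with budget $\delta$. On the tail block $\{j\ge N,k\ge N\}$, apply the proportional estimate with $s=\delta$ and $\sum c_{jk}\le1$ to obtain $\mu\ge1-\eta$ and $\nu<\eta$. On the two mixed blocks, apply it with $s=T$; since $\sum_{j<N,k\ge N}c_{jk}\le P_{N-1}/P_m\to0$ and symmetrically $\sum_{j\ge N,k<N}c_{jk}\le Q_{N-1}/Q_n\to0$, for all large $m,n$ the corresponding total budgets $(P_{N-1}/P_m)T$ and $(Q_{N-1}/Q_n)T$ fall below $\delta$, so monotonicity gives the same bounds. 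On the finite corner block, $\frac{1}{P_mQ_n}\sum_{j<N,k<N}p_jq_ky_{jk}$ is a fixed vector scaled by $1/(P_mQ_n)\to0$, so axiom (e) and its $\nu$-analogue (j) give $\mu\ge1-\eta$, $\nu<\eta$ for large $m,n$. Finally I combine the four blocks through the four-term forms of (d) and (i) with total budget $\delta+\delta+\delta+\delta=t$, concluding $\mu(t^{11}_{mn}-x,t)>1-\varepsilon$ and $\nu(t^{11}_{mn}-x,t)<\varepsilon$ for all large $m,n$. The main obstacle is exactly the control of the mixed blocks: there the summand count grows in one index while the prefactor decays in the other, and it is the interplay of q-boundedness with proportional budget-splitting that keeps the estimate uniform.
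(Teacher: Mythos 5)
Your proposal is correct and follows essentially the same strategy as the paper's proof: split the weighted sum into a tail block controlled by convergence and the remaining blocks controlled by q-boundedness together with $P_m,Q_n\to\infty$, combining the pieces via the axioms (c),(d),(h),(i) with proportionally distributed budgets. The only cosmetic differences are that you use four blocks with budget $t/4$ (handling the finite corner block directly by axiom (e)) where the paper uses three blocks with budget $t/3$ and absorbs the corner into a q-boundedness estimate.
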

\begin{proof}
Let double sequence $(x_{mn})$ in $(V,\mu,\nu)$ be q-bounded and convergent to $x\in V$. Fix $t>0$. Let $n_0$ be from Definition \ref{convergence} of convergence with $t\leftrightarrow\frac{t}{3}$. Then, we have
{\small\begin{eqnarray*}
\mu\left(t^{11}_{mn}-x,t\right)&=&\mu\left(\frac{1}{P_mQ_n}\sum_{j=0}^{m}\sum_{k=0}^{n}p_jq_kx_{jk}-x,t\right)
=\mu\left(\sum_{j=0}^{m}\sum_{k=0}^{n}p_jq_k(x_{jk}-x),P_mQ_nt\right)
\\&\geq&
\min\left\{\mu\left(\sum_{j=0}^{m}\sum_{k=0}^{n_0}p_jq_k(x_{jk}-x),P_mQ_nt/3\right), \mu\left(\sum_{j=0}^{n_0}\sum_{k=n_0+1}^{n}p_jq_k(x_{jk}-x),P_mQ_nt/3\right),\right.
\\&&
\qquad\qquad\left.\mu\left(\sum_{j=n_0+1}^{m}\sum_{k=n_0+1}^{n}p_jq_k(x_{jk}-x),P_mQ_nt/3\right)\right\}
\\&\geq&
\min\left\{\min_{\substack{j\in\mathbb{N}\\0\leq k\leq n_0}}\mu\left(x_{jk}-x,\frac{Q_n}{n_0A}\frac{t}{3}\right), \min_{\substack{k\in\mathbb{N}\\0\leq j\leq n_0}}\mu\left(x_{jk}-x,\frac{P_m}{n_0B}\frac{t}{3}\right), \min_{n_0< k,j}\mu\left(x_{jk}-x,\frac{t}{3}\right)\right\}
\\&\geq&
\min\left\{\inf_{j,k\in\mathbb{N}}\mu\left(x_{jk}-x,\frac{Q_n}{n_0A}\frac{t}{3}\right), \inf_{j,k\in\mathbb{N}}\mu\left(x_{jk}-x,\frac{P_m}{n_0B}\frac{t}{3}\right), \min_{n_0<j,k}\mu\left(x_{jk}-x,\frac{t}{3}\right)\right\}
\end{eqnarray*}}
and
\begin{eqnarray*}
\nu\left(t^{11}_{mn}-x,t\right)&=&\nu\left(\sum_{j=0}^{m}\sum_{k=0}^{n}p_jq_k(x_{jk}-x),P_mQ_nt\right)
\\&\leq&
\max\left\{\sup_{j,k\in\mathbb{N}}\nu\left(x_{jk}-x,\frac{Q_n}{n_0A}\frac{t}{3}\right), \sup_{j,k\in\mathbb{N}}\nu\left(x_{jk}-x,\frac{P_m}{n_0B}\frac{t}{3}\right), \max_{n_0< k,j}\nu\left(x_{jk}-x,\frac{t}{3}\right)\right\}.
\end{eqnarray*}
where $A=\max_{0\leq k\leq n_0}q_k, B=\max_{0\leq j\leq n_0}p_j$. Also, by q-boundedness of $(x_{mn})$ there exists $n_1\in\mathbb{N}$ such that
\begin{eqnarray*}
\inf_{j,k\in\mathbb{N}}\mu\left(x_{jk}-x,\frac{Q_n}{n_0A}\frac{t}{3}\right)>1-\varepsilon,\quad\sup_{j,k\in\mathbb{N}}\nu\left(x_{jk}-x,\frac{Q_n}{n_0A}\frac{t}{3}\right)<\varepsilon \\
\inf_{j,k\in\mathbb{N}}\mu\left(x_{jk}-x,\frac{P_m}{n_0B}\frac{t}{3}\right)>1-\varepsilon,\quad \sup_{j,k\in\mathbb{N}}\nu\left(x_{jk}-x,\frac{P_m}{n_0B}\frac{t}{3}\right)<\varepsilon
\end{eqnarray*}
for $n,m>n_1$. Hence, combining all above we get
\begin{eqnarray*}
\mu\left(t^{11}_{mn}-x,t\right)>1-\varepsilon,\qquad \nu\left(t^{11}_{mn}-x,t\right)<\varepsilon
\end{eqnarray*}
for $m,n>\max\{n_0,n_1\}$, which implies $t^{11}_{mn}\to x$ by Definition \ref{convergence}.
\end{proof}
The converse statement of Theorem \ref{regular} is not valid which can be seen by the next example. That is, $(\bar{N},p,q;1,1)$ summability does not imply convergence in $IFNS$.
\begin{example}
Let us consider $IFNS$ $(\mathbb{R},\mu_0,\nu_0)$ where $\mu_0,\nu_0$ are from Example \ref{standartnorm}. Sequence $(x_{mn})$ defined by $x_{mn}=(-1)^{m+n}$ is in $(\mathbb{R},\mu_0,\nu_0)$. $(x_{mn})$ is $(\bar{N},p,q;1,1)$ summable to 0 with $p_j=1,q_k=1(j,k\in\mathbb{N})$, but it is not convergent in $(\mathbb{R},\mu_0,\nu_0)$. See also \cite[Example 3.3]{taloyavuz}.
\end{example}
Our aim is to give the conditions under which $(\bar{N},p,q;1,1)$ summability implies convergence in $IFNS$.

Let $SV\!A_+$ be the set of all nonnegative sequences $p=(p_j)$ with $p_0>0$ satisfying
\begin{eqnarray*}
\liminf_{m\to\infty}\left|\frac{P_{\lambda_m}}{P_m}-1\right|>0\quad for \ all\ \lambda>0 \ with\ \lambda\neq1
\end{eqnarray*}
where $\lambda_m=[\lambda m]$ and $[\cdot]$ denotes integral part\cite{chen}. In \cite[Lemma 2.2]{chen}, equivalent assertions for the set $SV\!A_+$ are obtained.
\begin{theorem}\label{theorem}
Let $p,q\in SV\!A_+$ and double sequence $(x_{mn})$ be in $(V,\mu,\nu)$. Assume that $x_{mn}\to x\ (\bar{N},p,q;1,1)$. Then, $x_{mn}\to x$ if and only if for all $t>0$
\begin{eqnarray}\label{tauber1}
\sup_{\lambda>1}\liminf_{m,n\rightarrow\infty}\mu\left(\frac{1}{(P_{\lambda_m}-P_m)(Q_{\lambda_n}-Q_n)}\sum_{j=m+1}^{\lambda_m}\sum_{k=n+1}^{\lambda_n}p_jq_k(x_{jk}-x_{mn}),t\right)=1
\end{eqnarray}
and
\begin{eqnarray}\label{tauber2}
\inf_{\lambda>1}\limsup_{m,n\rightarrow\infty}\nu\left(\frac{1}{(P_{\lambda_m}-P_m)(Q_{\lambda_n}-Q_n)}\sum_{j=m+1}^{\lambda_m}\sum_{k=n+1}^{\lambda_n}p_jq_k(x_{jk}-x_{mn}),t\right)=0.
\end{eqnarray}
\end{theorem}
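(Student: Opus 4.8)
The statement is an equivalence under the standing assumption $x_{mn}\to x\ (\bar{N},p,q;1,1)$, so the plan is to treat the two implications separately, the converse (sufficiency) being the substantial one. The organizing device for both directions is the block (de la Vall\'ee Poussin type) mean
\begin{equation*}
\tau^{\lambda}_{mn}=\frac{1}{(P_{\lambda_m}-P_m)(Q_{\lambda_n}-Q_n)}\sum_{j=m+1}^{\lambda_m}\sum_{k=n+1}^{\lambda_n}p_jq_kx_{jk}\qquad(\lambda>1),
\end{equation*}
because the vector appearing inside $\mu$ and $\nu$ in \eqref{tauber1}--\eqref{tauber2} is precisely $\tau^{\lambda}_{mn}-x_{mn}$ (the weights $p_jq_k/D$ over the block sum to $1$, where $D=(P_{\lambda_m}-P_m)(Q_{\lambda_n}-Q_n)$). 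Throughout I will use the monotonicity of $\mu(x,\cdot)$ (nondecreasing) and $\nu(x,\cdot)$ (nonincreasing), which follows from axioms (b),(d) and (g),(i), together with the sign-invariance $\mu(-z,s)=\mu(z,s)$, $\nu(-z,s)=\nu(z,s)$ coming from (c),(h).

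For necessity, assume $x_{mn}\to x$. Writing $x_{jk}-x_{mn}=(x_{jk}-x)-(x_{mn}-x)$, the first piece of $\tau^{\lambda}_{mn}-x_{mn}$ is a genuine convex combination of the vectors $(x_{jk}-x)$ with $j>m$, $k>n$; applying (c),(d) iteratively over the finite block gives $\mu(\sum a_{jk}(x_{jk}-x),s)\ge\min_{jk}\mu(x_{jk}-x,s)$ for weights $a_{jk}\ge0$ summing to $1$, and dually for $\nu$. Since every index in the block exceeds $m,n$, convergence $x_{jk}\to x$ forces this convex combination, and also $x_{mn}-x$, to tend to $\theta$; splitting $t$ into $t/2+t/2$ in (d) and (i) then yields, for each fixed $\lambda>1$, that $\liminf_{m,n}\mu(\tau^{\lambda}_{mn}-x_{mn},t)=1$ and $\limsup_{m,n}\nu(\tau^{\lambda}_{mn}-x_{mn},t)=0$. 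Hence the supremum in \eqref{tauber1} and the infimum in \eqref{tauber2} are attained trivially.

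For sufficiency I first prove that $\tau^{\lambda}_{mn}\to x$ for each fixed $\lambda>1$. Splitting each block sum into its four ``corner'' partial sums expresses $\tau^{\lambda}_{mn}$ through the given weighted means,
\begin{equation*}
\tau^{\lambda}_{mn}=c_1 t^{11}_{\lambda_m\lambda_n}-c_2 t^{11}_{m\lambda_n}-c_3 t^{11}_{\lambda_m n}+c_4 t^{11}_{mn},\qquad c_1-c_2-c_3+c_4=1,
\end{equation*}
with $c_1=\tfrac{P_{\lambda_m}Q_{\lambda_n}}{D},\ c_2=\tfrac{P_mQ_{\lambda_n}}{D},\ c_3=\tfrac{P_{\lambda_m}Q_n}{D},\ c_4=\tfrac{P_mQ_n}{D}$. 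Because all four means converge to $x$ by hypothesis, $\tau^{\lambda}_{mn}-x$ is a finite signed combination of vectors tending to $\theta$. This is the step where $p,q\in SV\!A_+$ is essential: for $\lambda>1$ one has $\liminf_m(P_{\lambda_m}/P_m-1)>0$ (and likewise for $q$), equivalently the assertions of \cite[Lemma 2.2]{chen}, so that $\tfrac{P_{\lambda_m}}{P_{\lambda_m}-P_m}$ and $\tfrac{P_m}{P_{\lambda_m}-P_m}$ stay bounded for large $m$; hence the $c_i$ are bounded, say $|c_i|\le C$. Using $\mu(c_i z,s)=\mu(z,s/|c_i|)\ge\mu(z,s/C)$ and applying (d) with $t$ split into four equal parts (signs being irrelevant by (c)), I obtain $\mu(\tau^{\lambda}_{mn}-x,t)\ge\min_i\mu(t^{11}_{\bullet}-x,t/(4C))\to1$, and dually $\nu(\tau^{\lambda}_{mn}-x,t)\to0$; thus $\tau^{\lambda}_{mn}\to x$.

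Finally I combine the two facts. From $x_{mn}-x=(x_{mn}-\tau^{\lambda}_{mn})+(\tau^{\lambda}_{mn}-x)$ and (d),(i),
\begin{equation*}
\mu(x_{mn}-x,t)\ge\min\{\mu(x_{mn}-\tau^{\lambda}_{mn},t/2),\mu(\tau^{\lambda}_{mn}-x,t/2)\},
\end{equation*}
and symmetrically for $\nu$ with $\max$. Given $t>0$ and $\varepsilon\in(0,1)$, condition \eqref{tauber1} lets me pick $\lambda>1$ with $\liminf_{m,n}\mu(\tau^{\lambda}_{mn}-x_{mn},t/2)>1-\varepsilon$; for that $\lambda$ the already-established $\tau^{\lambda}_{mn}\to x$ controls the second term, so $\mu(x_{mn}-x,t)>1-\varepsilon$ for all large $m,n$. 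A possibly different $\lambda$ drawn from \eqref{tauber2} gives $\nu(x_{mn}-x,t)<\varepsilon$; taking the larger of the two index thresholds yields convergence in the sense of Definition \ref{convergence}. The main obstacle is the sufficiency direction, and within it the extraction of boundedness of the sign-changing coefficients $c_i$ from the $SV\!A_+$ hypothesis and the careful transfer of the resulting linear-combination limit into the $\mu,\nu$ estimates using only axioms (c),(d),(h),(i) and monotonicity.
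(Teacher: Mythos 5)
Your proof is correct and follows essentially the same route as the paper: your four-corner decomposition of $\tau^{\lambda}_{mn}$ is an algebraic regrouping of the paper's key identity \eqref{basiceq} (with $c_1=(1+a)(1+b)$, $c_2=a(1+b)$, $c_3=b(1+a)$, $c_4=ab$ for $a=(P_{\lambda_m}/P_m-1)^{-1}$, $b=(Q_{\lambda_n}/Q_n-1)^{-1}$), and your use of $SV\!A_+$ to bound these coefficients together with the final splitting of $t$ matches the paper's sufficiency argument. The only (harmless) divergence is in the necessity direction, where you argue directly from $x_{mn}\to x$ via the convex-combination inequality rather than through \eqref{basiceq} and $t^{11}_{mn}\to x$ as the paper does.
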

\begin{proof}
{\it Necessity.} Let $x_{mn}\to x$.  Fix $t>0$. For any $\lambda>1$, and for sufficiently large $m,n$ such that $\lambda_m>m, \lambda_n>n$ we have(see\cite[Eqation (3.3)]{chen})
\begin{eqnarray}\label{basiceq}
&&\frac{1}{(P_{\lambda_m}-P_m)(Q_{\lambda_n}-Q_n)}\sum_{j=m+1}^{\lambda_m}\sum_{k=n+1}^{\lambda_n}p_jq_k(x_{jk}-x_{mn})=
\\&& t^{11}_{\lambda_m,\lambda_n}-x_{mn}+\frac{1}{\frac{P_{\lambda_m}}{P_m}-1}(t^{11}_{\lambda_m,\lambda_n}-t^{11}_{m,\lambda_n})+\frac{1}{\frac{Q_{\lambda_n}}{Q_n}-1}(t^{11}_{\lambda_m,\lambda_n}-t^{11}_{\lambda_m,n})\nonumber
\\&&
+\frac{1}{\left(\frac{P_{\lambda_m}}{P_m}-1\right)}\frac{1}{\left(\frac{Q_{\lambda_n}}{Q_n}-1\right)}(t^{11}_{\lambda_m,\lambda_n}-t^{11}_{m,\lambda_n}-t^{11}_{\lambda_m,n}+t^{11}_{mn})\nonumber
\end{eqnarray}
Since $p,q\in SV\!A_+$ and $t^{11}_{mn}\to x$ we have
\begin{eqnarray*}
\mu\left(t^{11}_{\lambda_m,\lambda_n}-x_{mn},t\right)\to1,\quad \nu\left(t^{11}_{\lambda_m,\lambda_n}-x_{mn},t\right)\to0\ as\qquad \min\{m,n\}\to\infty,
\end{eqnarray*}
{\small\begin{eqnarray}\label{needed1}
\begin{split}
\mu\left(\frac{1}{\frac{P_{\lambda_m}}{P_m}-1}(t^{11}_{\lambda_m,\lambda_n}-t^{11}_{m,\lambda_n}),t\right)\geq\mu\left(t^{11}_{\lambda_m,\lambda_n}-t^{11}_{m,\lambda_n},\left\{\liminf_{m\to\infty} \left|\frac{P_{\lambda_m}}{P_m}-1\right|\right\}t\right)\to1\ as\ \min\{m,n\}\to\infty
\\
\nu\left(\frac{1}{\frac{P_{\lambda_m}}{P_m}-1}(t^{11}_{\lambda_m,\lambda_n}-t^{11}_{m,\lambda_n}),t\right)\leq\nu\left(t^{11}_{\lambda_m,\lambda_n}-t^{11}_{m,\lambda_n},\left\{\liminf_{m\to\infty} \left|\frac{P_{\lambda_m}}{P_m}-1\right|\right\}t\right)\to0\ as\ \min\{m,n\}\to\infty
\end{split}
\end{eqnarray}}
and
{\small\begin{eqnarray}\label{needed2}
\begin{split}
\mu\left(\frac{1}{\frac{Q_{\lambda_m}}{Q_m}-1}(t^{11}_{\lambda_m,\lambda_n}-t^{11}_{\lambda_m,n}),t\right)\geq\mu\left(t^{11}_{\lambda_m,\lambda_n}-t^{11}_{\lambda_m,n},\left\{\liminf_{m\to\infty} \left|\frac{Q_{\lambda_m}}{Q_m}-1\right|\right\}t\right)\to1\ as\ \min\{m,n\}\to\infty
\\
\nu\left(\frac{1}{\frac{Q_{\lambda_m}}{Q_m}-1}(t^{11}_{\lambda_m,\lambda_n}-t^{11}_{\lambda_m,n}),t\right)\leq\nu\left(t^{11}_{\lambda_m,\lambda_n}-t^{11}_{\lambda_m,n},\left\{\liminf_{m\to\infty} \left|\frac{Q_{\lambda_m}}{Q_m}-1\right|\right\}t\right)\to0\ as\ \min\{m,n\}\to\infty
\end{split}
\end{eqnarray}}
we obtain
\begin{eqnarray*}
\lim_{m,n\to\infty}\mu\left(\frac{1}{(P_{\lambda_m}-P_m)(Q_{\lambda_n}-Q_n)}\sum_{j=m+1}^{\lambda_m}\sum_{k=n+1}^{\lambda_n}p_jq_k(x_{jk}-x_{mn}),t\right)=1
\\ \lim_{m,n\to\infty}\nu\left(\frac{1}{(P_{\lambda_m}-P_m)(Q_{\lambda_n}-Q_n)}\sum_{j=m+1}^{\lambda_m}\sum_{k=n+1}^{\lambda_n}p_jq_k(x_{jk}-x_{mn}),t\right)=0.
\end{eqnarray*}
which implies \eqref{tauber1} and \eqref{tauber2}.

{\it Sufficiency.} Let \eqref{tauber1} and \eqref{tauber2} be satisfied. Fix $t>0$.  Then for given $\varepsilon>0$ we have:
\begin{itemize}
\item There exist $\lambda>1$ and $n_0\in\mathbb{N}$ such that
\begin{eqnarray*}
\mu\left(\frac{1}{(P_{\lambda_m}-P_m)(Q_{\lambda_n}-Q_n)}\sum_{j=m+1}^{\lambda_m}\sum_{k=n+1}^{\lambda_n}p_jq_k(x_{jk}-x_{mn}),t/5\right)>1-\varepsilon
\\
\nu\left(\frac{1}{(P_{\lambda_m}-P_m)(Q_{\lambda_n}-Q_n)}\sum_{j=m+1}^{\lambda_m}\sum_{k=n+1}^{\lambda_n}p_jq_k(x_{jk}-x_{mn}),t/5\right)<\varepsilon
\end{eqnarray*}
for all $m,n>n_0$.
\item There exists $n_1\in\mathbb{N}$ such that $\mu\left(t^{11}_{\lambda_m,\lambda_n}-x,t/5\right)>1-\varepsilon$ and $\nu\left(t^{11}_{\lambda_m,\lambda_n}-x,t/5\right)<\varepsilon$ for $m,n>n_1$.
\item There exists $n_2\in\mathbb{N}$ such that
\begin{equation*}
    \mu\left(\frac{1}{\frac{P_{\lambda_m}}{P_m}-1}(t^{11}_{\lambda_m,\lambda_n}-t^{11}_{m,\lambda_n}),t/5\right)>1-\varepsilon
    \ \textrm{and}\
    \nu\left(\frac{1}{\frac{P_{\lambda_m}}{P_m}-1}(t^{11}_{\lambda_m,\lambda_n}-t^{11}_{m,\lambda_n}),t/5\right)<\varepsilon
\end{equation*}
for $m,n>n_2$ by \eqref{needed1}.
\item There exists $n_3\in\mathbb{N}$ such that
\begin{equation*}
\mu\left(\frac{1}{\frac{Q_{\lambda_m}}{Q_m}-1}(t^{11}_{\lambda_m,\lambda_n}-t^{11}_{\lambda_m,n}),t/5\right)>1-\varepsilon
   \ \textrm{and}\
\nu\left(\frac{1}{\frac{Q_{\lambda_m}}{Q_m}-1}(t^{11}_{\lambda_m,\lambda_n}-t^{11}_{\lambda_m,n}),t/5\right)<\varepsilon
\end{equation*}
for $m,n>n_3$ by \eqref{needed2}.
\item There exists $n_4\in\mathbb{N}$ such that
\begin{eqnarray*}
\mu\left(\frac{1}{\left(\frac{P_{\lambda_m}}{P_m}-1\right)}\frac{1}{\left(\frac{Q_{\lambda_n}}{Q_n}-1\right)}(t^{11}_{\lambda_m,\lambda_n}-t^{11}_{m,\lambda_n}-t^{11}_{\lambda_m,n}+t^{11}_{mn}),t/5\right)>1-\varepsilon
\\
\nu\left(\frac{1}{\left(\frac{P_{\lambda_m}}{P_m}-1\right)}\frac{1}{\left(\frac{Q_{\lambda_n}}{Q_n}-1\right)}(t^{11}_{\lambda_m,\lambda_n}-t^{11}_{m,\lambda_n}-t^{11}_{\lambda_m,n}+t^{11}_{mn}),t/5\right)<\varepsilon
\end{eqnarray*}
for $m,n>n_4$.
\end{itemize}
Then, by equation \eqref{basiceq} we get
\begin{eqnarray*}
&&\mu\left(x_{mn}-x,t\right)\\
&\geq&\min\left\{\mu\left(t^{11}_{\lambda_m,\lambda_n}-x,t/5\right),\mu\left(\frac{1}{(P_{\lambda_m}-P_m)(Q_{\lambda_n}-Q_n)}\sum_{j=m+1}^{\lambda_m}\sum_{k=n+1}^{\lambda_n}p_jq_k(x_{jk}-x_{mn}),t/5\right),\right.
\\&&\qquad\qquad
\mu\left(\frac{1}{\frac{P_{\lambda_m}}{P_m}-1}(t^{11}_{\lambda_m,\lambda_n}-t^{11}_{m,\lambda_n}),t/5\right),\mu\left(\frac{1}{\frac{Q_{\lambda_m}}{Q_m}-1}(t^{11}_{\lambda_m,\lambda_n}-t^{11}_{\lambda_m,n}),t/5\right),
\\&&\qquad\qquad
\left.\mu\left(\frac{1}{\left(\frac{P_{\lambda_m}}{P_m}-1\right)}\frac{1}{\left(\frac{Q_{\lambda_n}}{Q_n}-1\right)}(t^{11}_{\lambda_m,\lambda_n}-t^{11}_{m,\lambda_n}-t^{11}_{\lambda_m,n}+t^{11}_{mn}),t/5\right)\right\}
\\&>&1-\varepsilon
\end{eqnarray*}
and, similarly,
\begin{eqnarray*}
\nu\left(x_{mn}-x,t\right)<\varepsilon
\end{eqnarray*}
for $m,n>\max\{n_0,n_1,n_2,n_3,n_4\}$, and this implies $x_{mn}\to x$. The proof is completed.
\end{proof}
We can give following theorem similar to Theorem \ref{theorem}. The proof is done in a similar way by making the changes $\lambda_m\leftrightarrow m$ and $\lambda_n\leftrightarrow n$.
\begin{theorem}\label{theorem2}
Let $p,q\in SV\!A_+$ and double sequence $(x_{mn})$ be in $(V,\mu,\nu)$. Assume that $x_{mn}\to x\ (\bar{N},p,q;1,1)$. Then, $x_{mn}\to x$ if and only if for all $t>0$
\begin{eqnarray*}
\sup_{0<\lambda<1}\liminf_{m,n\rightarrow\infty}\mu\left(\frac{1}{(P_m-P_{\lambda_m})(Q_n-Q_{\lambda_n})}\sum_{j=\lambda_m+1}^{m}\sum_{k=\lambda_n+1}^{n}p_jq_k(x_{mn}-x_{jk}),t\right)=1
\end{eqnarray*}
and
\begin{eqnarray*}
\inf_{0<\lambda<1}\limsup_{m,n\rightarrow\infty}\nu\left(\frac{1}{(P_m-P_{\lambda_m})(Q_n-Q_{\lambda_n})}\sum_{j=\lambda_m+1}^{m}\sum_{k=\lambda_n+1}^{n}p_jq_k(x_{mn}-x_{jk}),t\right)=0.
\end{eqnarray*}
\end{theorem}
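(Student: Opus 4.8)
The plan is to reproduce the architecture of the proof of Theorem \ref{theorem} under the index reflection $\lambda_m\leftrightarrow m$, $\lambda_n\leftrightarrow n$, so that the only genuinely new ingredient is the algebraic identity replacing \eqref{basiceq} in the regime $0<\lambda<1$, where now $\lambda_m<m$ and $\lambda_n<n$. First I would note that membership in $SV\!A_+$ is imposed for \emph{every} $\lambda\neq1$, so it already supplies, for $0<\lambda<1$, the bounds $\liminf_{m\to\infty}\left|\frac{P_m}{P_{\lambda_m}}-1\right|>0$ and $\liminf_{n\to\infty}\left|\frac{Q_n}{Q_{\lambda_n}}-1\right|>0$; indeed $\frac{P_m}{P_{\lambda_m}}-1=\frac{1-P_{\lambda_m}/P_m}{P_{\lambda_m}/P_m}$, and since $P_{\lambda_m}\le P_m$ the $SV\!A_+$ bound on $|P_{\lambda_m}/P_m-1|$ forces this ratio to stay away from $0$ (this also guarantees $P_m-P_{\lambda_m}>0$ for large $m$, so the de la Vall\'ee Poussin denominator is nonzero). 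These are precisely the scalars that will be pulled out through property (c) of the $IF$-norm.

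The key step is to establish, for sufficiently large $m,n$, the identity
\begin{eqnarray*}
&&\frac{1}{(P_m-P_{\lambda_m})(Q_n-Q_{\lambda_n})}\sum_{j=\lambda_m+1}^{m}\sum_{k=\lambda_n+1}^{n}p_jq_k(x_{mn}-x_{jk})\\
&&=(x_{mn}-t^{11}_{mn})-\frac{1}{\frac{P_m}{P_{\lambda_m}}-1}\left(t^{11}_{mn}-t^{11}_{\lambda_m,n}\right)-\frac{1}{\frac{Q_n}{Q_{\lambda_n}}-1}\left(t^{11}_{mn}-t^{11}_{m,\lambda_n}\right)\\
&&\quad-\frac{1}{\left(\frac{P_m}{P_{\lambda_m}}-1\right)\left(\frac{Q_n}{Q_{\lambda_n}}-1\right)}\left(t^{11}_{mn}-t^{11}_{\lambda_m,n}-t^{11}_{m,\lambda_n}+t^{11}_{\lambda_m,\lambda_n}\right),
\end{eqnarray*}
which I would obtain purely algebraically: write the block sum via inclusion--exclusion over the rectangle as $P_mQ_n\,t^{11}_{mn}-P_{\lambda_m}Q_n\,t^{11}_{\lambda_m,n}-P_mQ_{\lambda_n}\,t^{11}_{m,\lambda_n}+P_{\lambda_m}Q_{\lambda_n}\,t^{11}_{\lambda_m,\lambda_n}$ and use $\sum_{j=\lambda_m+1}^m p_j=P_m-P_{\lambda_m}$, $\sum_{k=\lambda_n+1}^n q_k=Q_n-Q_{\lambda_n}$. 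The one-index model computation $\frac{1}{P_m-P_{\lambda_m}}\sum_{j=\lambda_m+1}^{m}p_j(x_m-x_j)=(x_m-t_m)-\frac{1}{\frac{P_m}{P_{\lambda_m}}-1}(t_m-t_{\lambda_m})$ fixes the signs and the reciprocal coefficients, and the bilinear (tensor) version produces the cross term above. Note that here the \emph{reference} mean is $t^{11}_{mn}$ (the large-index corner), dual to the role of $t^{11}_{\lambda_m,\lambda_n}$ in \eqref{basiceq}.

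Granting this identity, the remainder mirrors Theorem \ref{theorem} verbatim. For \textbf{necessity} I would assume $x_{mn}\to x$; then, together with the standing hypothesis $x_{mn}\to x\,(\bar N,p,q;1,1)$, every corner mean $t^{11}_{mn},t^{11}_{\lambda_m,n},t^{11}_{m,\lambda_n},t^{11}_{\lambda_m,\lambda_n}$ and $x_{mn}$ converges to $x$ as $m,n\to\infty$ (since $\lambda_m,\lambda_n\to\infty$), so each of the four terms on the right tends to $\theta$ in $(V,\mu,\nu)$, the reciprocal scalars being neutralized exactly as in \eqref{needed1}--\eqref{needed2}; hence the left-hand mean tends to $\theta$, giving the $\sup_{0<\lambda<1}$ and $\inf_{0<\lambda<1}$ equalities. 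For \textbf{sufficiency} I would fix $t>0$ and $\varepsilon>0$, solve the identity for $x_{mn}-x=(t^{11}_{mn}-x)+(\text{block mean})+(\text{three corrections})$, choose $\lambda\in(0,1)$ realizing the $\sup/\inf$ so that the block-mean term is $\varepsilon$-small at scale $t/5$, and then bound $\mu(x_{mn}-x,t)$ below and $\nu(x_{mn}-x,t)$ above by splitting over these five summands via properties (d) and (i) and (c), each of the other four being controlled for large $m,n$ by the summability hypothesis and the $SV\!A_+$ estimates, exactly as in the five-way $t/5$ decomposition of Theorem \ref{theorem}. The main obstacle is purely the bookkeeping of the rearranged identity: getting the two reciprocal coefficients and all signs of the cross term correct; once that display is verified, the $IF$-norm manipulations are identical to those already carried out.
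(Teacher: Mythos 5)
Your proposal is correct and follows exactly the route the paper intends: the paper omits the proof of Theorem \ref{theorem2}, stating only that it is obtained from the proof of Theorem \ref{theorem} via the interchange $\lambda_m\leftrightarrow m$, $\lambda_n\leftrightarrow n$, which is precisely what you carry out. Your replacement identity for \eqref{basiceq} (the $0<\lambda<1$ analogue of Chen's equation, with reference corner $t^{11}_{mn}$) checks out algebraically, and your observation that $SV\!A_+$ already controls $P_m/P_{\lambda_m}-1$ for $0<\lambda<1$ is the right way to neutralize the reciprocal scalars.
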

Now we define slow oscillation for double sequences in $IFNS$.
\begin{definition}
A double sequence $(x_{mn})$ in $(V,\mu,\nu)$ is said to be slowly oscillating in the sense (1,1) if
\begin{equation*}
\sup_{\lambda>1}\liminf_{m,n\rightarrow\infty}\min_{\substack{m<j\leq\lambda_m\\ n<k\leq\lambda_n}}\mu(x_{jk}-x_{mn},t)=1
\end{equation*}
and
\begin{equation*}
\inf_{\lambda>1}\limsup_{m,n\rightarrow\infty}\max_{\substack{m<j\leq\lambda_m\\ n<k\leq\lambda_n}}\nu(x_{jk}-x_{mn},t)=0 ,
\end{equation*}
for all $t>0$.
\end{definition}
A double sequence $(x_{mn})$ is slowly oscillating in the sense (1,1) if and only if for all $t>0$ and for all $\varepsilon\in (0,1)$ there exist
$\lambda>1$ and $n_0\in \mathbb{N}$, depending on $t$ and $\varepsilon$, such that
\begin{equation*}
    \mu(x_{jk}-x_{mn},t)>1-\varepsilon \quad \textrm{and} \quad \nu(x_{jk}-x_{mn},t)<\varepsilon
\end{equation*}
whenever $n_0\leq m< j\leq\lambda_m$ and $n_0\leq n< k\leq\lambda_n$.
\begin{theorem}\label{slowtauber}
Let $p,q\in SV\!A_+$ and $x\in V$. If double sequence $(x_{mn})$ in $(V,\mu,\nu)$ is slowly oscillating in the sense (1,1) and $x_{mn}\to x\ (\bar{N},p,q;1,1)$, then $x_{mn}\to x$.
\end{theorem}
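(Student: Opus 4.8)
The plan is to deduce the theorem directly from Theorem~\ref{theorem}. Since $x_{mn}\to x\ (\bar{N},p,q;1,1)$ is assumed and $p,q\in SV\!A_+$, it suffices to verify that slow oscillation in the sense $(1,1)$ forces the Tauberian conditions \eqref{tauber1} and \eqref{tauber2}; once these hold, Theorem~\ref{theorem} yields $x_{mn}\to x$ immediately. Thus the whole argument reduces to controlling, for fixed $t>0$, the expression
\begin{equation*}
\frac{1}{(P_{\lambda_m}-P_m)(Q_{\lambda_n}-Q_n)}\sum_{j=m+1}^{\lambda_m}\sum_{k=n+1}^{\lambda_n}p_jq_k(x_{jk}-x_{mn})
\end{equation*}
by the oscillation of the differences $x_{jk}-x_{mn}$ over the block $m<j\le\lambda_m$, $n<k\le\lambda_n$.

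The central observation is that this expression is a convex combination of the differences $x_{jk}-x_{mn}$ with nonnegative weights $w_{jk}=\frac{p_jq_k}{(P_{\lambda_m}-P_m)(Q_{\lambda_n}-Q_n)}$ summing to $1$. I would therefore first record a finite convexity estimate for the $IF$-norm: if $w_i\ge0$ with $\sum_i w_i=1$ and $z_i\in V$, then for every $t>0$
\begin{equation*}
\mu\left(\sum_i w_i z_i,t\right)\ge\min_{i}\mu(z_i,t),\qquad \nu\left(\sum_i w_i z_i,t\right)\le\max_{i}\nu(z_i,t).
\end{equation*}
This follows by induction on the number of terms: axiom~(c) gives $\mu(w_iz_i,w_it)=\mu(z_i,t)$ for $w_i>0$, and the triangle-type axiom~(d) applied to the splitting $t=\sum_i w_i t$ propagates the minimum; the dual inequality for $\nu$ uses axioms~(h) and~(i).

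Applying this estimate with $z_{jk}=x_{jk}-x_{mn}$ gives
\begin{equation*}
\mu\left(\frac{1}{(P_{\lambda_m}-P_m)(Q_{\lambda_n}-Q_n)}\sum_{j=m+1}^{\lambda_m}\sum_{k=n+1}^{\lambda_n}p_jq_k(x_{jk}-x_{mn}),t\right)\ge\min_{\substack{m<j\le\lambda_m\\ n<k\le\lambda_n}}\mu(x_{jk}-x_{mn},t),
\end{equation*}
and the reversed inequality for $\nu$ with $\max$ in place of $\min$; here the extremum may be taken over the full block, since passing from the positive-weight indices to the full block only lowers a minimum and raises a maximum. I would then apply $\liminf_{m,n\to\infty}$ followed by $\sup_{\lambda>1}$ on the $\mu$-side, and $\limsup_{m,n\to\infty}$ followed by $\inf_{\lambda>1}$ on the $\nu$-side. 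Slow oscillation in the sense $(1,1)$ says exactly that the right-hand sides have supremum/infimum equal to $1$ and $0$; since $\mu\le1$ and $\nu\ge0$, the left-hand sides are squeezed to the same values, which is precisely \eqref{tauber1} and \eqref{tauber2}.

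I do not anticipate a serious obstacle. The only point requiring genuine care is the convexity lemma for the $IF$-norm, and within it the bookkeeping for zero-weight terms, so that the extremum in the bound can legitimately be enlarged to the full block appearing in the definition of slow oscillation rather than only to the indices with $p_jq_k>0$. Everything after that is a direct transcription through the monotone operations $\liminf$, $\limsup$, $\sup$, $\inf$, followed by the invocation of Theorem~\ref{theorem}.
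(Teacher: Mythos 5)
Your proposal is correct and follows essentially the same route as the paper: both verify that slow oscillation in the sense $(1,1)$ implies the Tauberian conditions \eqref{tauber1} and \eqref{tauber2} via the convex-combination estimate $\mu\bigl(\sum_i w_i z_i,t\bigr)\ge\min_i\mu(z_i,t)$ (and its dual for $\nu$), and then invoke Theorem~\ref{theorem}. The only difference is that you spell out the inductive justification of that estimate and the treatment of zero-weight terms, which the paper asserts without comment.
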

\begin{proof}
Let $x_{mn}\to x\ (\bar{N},p,q;1,1)$ and $(x_{mn})$ be slowly oscillating in the sense (1,1). Fix $t>0$. Then for given $\varepsilon\in (0,1)$ there exist $\lambda>1$ and $n_0\in \mathbb{N}$ such that
\begin{equation*}
     \mu(x_{jk}-x_{mn},t)>1-\varepsilon \quad \textrm{and} \quad \nu(x_k-x_n,t)<\varepsilon
\end{equation*}
whenever $n_0\leq m< j\leq\lambda_m$ and $n_0\leq n< k\leq\lambda_n$ by slow oscillation. Hence, we have
\begin{eqnarray*}
\mu\left(\frac{1}{(P_{\lambda_m}-P_m)(Q_{\lambda_n}-Q_n)}\sum_{j=m+1}^{\lambda_m}\sum_{k=n+1}^{\lambda_n}p_jq_k(x_{jk}-x_{mn}),t\right)
\geq
\min_{\substack{m<j\leq\lambda_m\\ n<k\leq\lambda_n}}\mu(x_{jk}-x_{mn},t)>1-\varepsilon
\end{eqnarray*}
and
\begin{eqnarray*}
\nu\left(\frac{1}{(P_{\lambda_m}-P_m)(Q_{\lambda_n}-Q_n)}\sum_{j=m+1}^{\lambda_m}\sum_{k=n+1}^{\lambda_n}p_jq_k(x_{jk}-x_{mn}),t\right)
\leq
\max_{\substack{m<j\leq\lambda_m\\ n<k\leq\lambda_n}}\nu(x_{jk}-x_{mn},t)<\varepsilon.
\end{eqnarray*}
So conditions \eqref{tauber1} and \eqref{tauber2} are satisfied. Then, by Theorem \ref{theorem} we get $x_{mn}\to x$.
\end{proof}
\begin{definition}
A double sequence $(x_{mn})$ in $(V,\mu,\nu)$ is said to be slowly oscillating in the sense (1,0) if
\begin{equation*}
\sup_{\lambda>1}\liminf_{m,n\rightarrow\infty}\min_{m<j\leq\lambda_m}\mu(x_{jn}-x_{mn},t)=1
\end{equation*}
and
\begin{equation*}
\inf_{\lambda>1}\limsup_{m,n\rightarrow\infty}\max_{m<j\leq\lambda_m}\nu(x_{jn}-x_{mn},t)=0 ,
\end{equation*}
for all $t>0$.
\end{definition}
\begin{definition}
A double sequence $(x_{mn})$ in $(V,\mu,\nu)$ is said to be slowly oscillating in the sense (0,1) if
\begin{equation*}
\sup_{\lambda>1}\liminf_{m,n\rightarrow\infty}\min_{n<k\leq\lambda_n}\mu(x_{mk}-x_{mn},t)=1
\end{equation*}
and
\begin{equation*}
\inf_{\lambda>1}\limsup_{m,n\rightarrow\infty}\max_{n<k\leq\lambda_n}\nu(x_{mk}-x_{mn},t)=0 ,
\end{equation*}
for all $t>0$.
\end{definition}
\begin{theorem}\label{both}
If a double sequence $(x_{mn})$ in $(V,\mu,\nu)$ is slowly oscillating in the sense (1,0) and (0,1), then it is slowly oscillating in the sense (1,1).
\end{theorem}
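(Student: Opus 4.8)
The plan is to reduce the statement to the equivalent $\varepsilon$-$\lambda$-$n_0$ formulation of slow oscillation — of exactly the type already recorded in the excerpt for the sense $(1,1)$ — and then to exploit an additive splitting of the increment $x_{jk}-x_{mn}$ into one piece governed by $(1,0)$-oscillation and one piece governed by $(0,1)$-oscillation.

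First I would unwind the two hypotheses into pointwise form, just as the excerpt does for $(1,1)$. From $\sup_{\lambda>1}\liminf_{m,n}\min_{m<j\le\lambda_m}\mu(x_{jn}-x_{mn},t)=1$ one obtains: for every $t>0$ and $\varepsilon\in(0,1)$ there are $\lambda_1>1$ and $N_1$ with $\mu(x_{jn}-x_{mn},t)>1-\varepsilon$ and $\nu(x_{jn}-x_{mn},t)<\varepsilon$ whenever $N_1\le m<j\le\lambda_m$ and $n\ge N_1$. Symmetrically, the $(0,1)$-hypothesis yields $\lambda_2>1$ and $N_2$ with $\mu(x_{ik}-x_{in},t)>1-\varepsilon$ and $\nu(x_{ik}-x_{in},t)<\varepsilon$ whenever $N_2\le n<k\le\lambda_n$ and $i\ge N_2$; crucially, this holds with the first index $i$ free on $\{i\ge N_2\}$, because the $\liminf$ is taken as $m,n\to\infty$ in Pringsheim's sense.

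Then I would fix $t>0$ and $\varepsilon\in(0,1)$, apply both reductions at level $t/2$, and set $\lambda=\min\{\lambda_1,\lambda_2\}>1$ and $n_0=\max\{N_1,N_2\}$ (shrinking $\lambda$ is harmless, since an estimate valid for $m<j\le\lambda_m$ is valid a fortiori on the smaller block $m<j\le\lambda'_m$). For $n_0\le m<j\le\lambda_m$ and $n_0\le n<k\le\lambda_n$ I write
\[
x_{jk}-x_{mn}=(x_{jk}-x_{jn})+(x_{jn}-x_{mn}).
\]
The second summand is controlled by the $(1,0)$-estimate at level $t/2$. For the first summand I apply the $(0,1)$-estimate with its free first index specialized to $j$; this is legitimate precisely because $j>m\ge n_0\ge N_2$, so $j$ lies in the admissible range, while $n_0\le n<k\le\lambda_n$ puts the second index in range.

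Finally I would combine the two bounds through the defining inequalities (d) and (i) of the $IF$-norm, obtaining $\mu(x_{jk}-x_{mn},t)\ge\min\{\mu(x_{jk}-x_{jn},t/2),\mu(x_{jn}-x_{mn},t/2)\}>1-\varepsilon$ and $\nu(x_{jk}-x_{mn},t)\le\max\{\nu(x_{jk}-x_{jn},t/2),\nu(x_{jn}-x_{mn},t/2)\}<\varepsilon$. This is exactly the $\varepsilon$-$\lambda$-$n_0$ description of slow oscillation in the sense $(1,1)$, so passing to $\liminf$/$\limsup$ and then to the $\sup$/$\inf$ over $\lambda$ recovers the two stated equalities. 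The one point needing genuine care — and which I expect to be the only real obstacle — is the specialization of the $(0,1)$ increment to the moving index $j$: it works solely because the $(0,1)$-hypothesis furnishes a bound uniform in the first index over $\{i\ge n_0\}$, rather than a bound along a single fixed row. Everything else is routine bookkeeping.
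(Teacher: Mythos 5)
Your proof is correct and follows essentially the same route as the paper: an additive splitting of the increment through an intermediate corner point, combined with the $IF$-norm inequalities (d) and (i) at level $t/2$. The only (immaterial) difference is that you pass through $x_{jn}$ where the paper passes through $x_{mk}$; your write-up also makes explicit the uniformity-in-the-free-index point that the paper's one-line proof leaves implicit.
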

\begin{proof}
By the facts that
\begin{eqnarray*}
\mu(x_{jk}-x_{mn},t)\geq\min\{\mu(x_{jk}-x_{mk},t/2),\mu(x_{mk}-x_{mn},t/2)\}
\\
\nu(x_{jk}-x_{mn},t)\leq\max\{\nu(x_{jk}-x_{mk},t/2),\nu(x_{mk}-x_{mn},t/2)\}
\end{eqnarray*}
and by slow oscillation of $(x_{mn})$ in the sense (1,0) and (0,1), we conclude that $(x_{mn})$ is slowly oscillating in the sense (1,1).
\end{proof}
By Theorem \ref{theorem}, Theorem \ref{slowtauber} and Theorem \ref{both}, we get following theorem.
\begin{theorem}\label{(1,0)(0,1)tauber}
Let $p,q\in SV\!A_+$ and $x\in V$. If double sequence $(x_{mn})$ in $(V,\mu,\nu)$ is slowly oscillating in the senses (1,0)\&(0,1) and $x_{mn}\to x\ (\bar{N},p,q;1,1)$, then $x_{mn}\to x$
\end{theorem}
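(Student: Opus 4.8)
The plan is to obtain this statement as a direct composition of the two structural results already established, namely Theorem \ref{both} and Theorem \ref{slowtauber}, so that no new estimate is required. The strategy is to first upgrade the two directional slow-oscillation hypotheses into joint slow oscillation in the sense $(1,1)$, and then to apply the Tauberian theorem for slowly oscillating double sequences.

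First I would invoke Theorem \ref{both}. Since $(x_{mn})$ is assumed to be slowly oscillating in the senses $(1,0)$ and $(0,1)$, that theorem immediately yields that $(x_{mn})$ is slowly oscillating in the sense $(1,1)$. The content of this step lies in the triangle-type inequalities for the IF-norm $(\mu,\nu)$ that were exploited in the proof of Theorem \ref{both}; in the present argument I would simply cite the conclusion rather than redo those estimates.

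Next, I would feed this into Theorem \ref{slowtauber}. The remaining hypotheses of that theorem are precisely the ones already granted to us: $p,q\in SV\!A_+$, $x\in V$, and $x_{mn}\to x\ (\bar{N},p,q;1,1)$. Having just established slow oscillation in the sense $(1,1)$, all hypotheses of Theorem \ref{slowtauber} are met, and its conclusion delivers $x_{mn}\to x$, which completes the proof.

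I do not expect any genuine obstacle here, because the substantive work has already been distributed across the earlier results: Theorem \ref{both} handles the passage from directional to joint slow oscillation through the min/max behaviour of $\mu$ and $\nu$, while Theorem \ref{slowtauber} converts slow oscillation in the sense $(1,1)$ together with $(\bar{N},p,q;1,1)$ summability into convergence by verifying the Tauberian conditions \eqref{tauber1} and \eqref{tauber2} of Theorem \ref{theorem}. The only point worth stating with care is that the two directional slow-oscillation hypotheses do combine to give the joint one, which is exactly the mechanism supplied by Theorem \ref{both}; beyond that observation, the argument is a verbatim chaining of the cited theorems.
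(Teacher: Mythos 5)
Your proposal is correct and follows exactly the route the paper takes: the paper states this result as an immediate consequence of Theorem \ref{both} (upgrading slow oscillation in the senses $(1,0)$ and $(0,1)$ to the sense $(1,1)$) followed by Theorem \ref{slowtauber}. No discrepancy to report.
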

In view of the thoerem above and Theorem \ref{boundedtoslowly}, we give the next theorem.
\begin{theorem}
Let $p,q\in SV\!A_+$ and $x\in V$. If $x_{mn}\to x\ (\bar{N},p,q;1,1)$ and sequences $\left\{m(x_{mn}-x_{m-1,n})\right\}$, $\left\{n(x_{mn}-x_{m,n-1})\right\}$ are q-bounded, then $x_{mn}\to x$.
\end{theorem}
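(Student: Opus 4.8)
The plan is to deduce the statement from Theorem \ref{(1,0)(0,1)tauber}, which already shows that $(\bar N,p,q;1,1)$ summability together with slow oscillation in the senses $(1,0)$ and $(0,1)$ forces $x_{mn}\to x$. Thus the whole task reduces to two implications: that q-boundedness of $\{m(x_{mn}-x_{m-1,n})\}$ yields slow oscillation in the sense $(1,0)$, and that q-boundedness of $\{n(x_{mn}-x_{m,n-1})\}$ yields slow oscillation in the sense $(0,1)$. These are the double-index analogues of Theorem \ref{boundedtoslowly}, and I would prove the first one directly (the second being symmetric) by adapting the telescoping argument behind Theorem \ref{boundedtoslowly}.

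For the sense $(1,0)$, fix $t>0$ and $\lambda>1$, and for $m<j\le\lambda_m$ write the telescoping identity
\[
x_{jn}-x_{mn}=\sum_{i=m+1}^{j}\frac1i\,\bigl(i(x_{in}-x_{i-1,n})\bigr).
\]
Applying axiom (d) of the $IF$-norm iteratively with the equal split $t=\sum_{i=m+1}^{j} t/(j-m)$, followed by the homogeneity axiom (c), I would obtain
\[
\mu(x_{jn}-x_{mn},t)\ \ge\ \min_{m<i\le j}\mu\!\left(i(x_{in}-x_{i-1,n}),\tfrac{it}{j-m}\right).
\]
The decisive estimate is that $i>m$ and $j-m\le(\lambda-1)m$ (since $\lambda_m=[\lambda m]\le\lambda m$), hence $\tfrac{it}{j-m}>\tfrac{t}{\lambda-1}$; using that $\mu(z,\cdot)$ is nondecreasing (a consequence of axiom (d) with $y=\theta$) this bounds each term below by $\inf_{m',n'}\mu\bigl(m'(x_{m'n'}-x_{m'-1,n'}),t/(\lambda-1)\bigr)$, a quantity independent of $m$ and $n$. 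Taking $\liminf_{m,n\to\infty}$ and then $\sup_{\lambda>1}$, and letting $\lambda\to1^{+}$ so that $t/(\lambda-1)\to\infty$, the q-boundedness of $\{m(x_{mn}-x_{m-1,n})\}$ (Definition \ref{q-bounded}) makes this equal to $1$. The same computation with axioms (i) and (h) and with $\sup/\limsup$ in place of $\inf/\liminf$ handles the $\nu$-condition, establishing slow oscillation in the sense $(1,0)$.

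Interchanging the roles of the two indices gives slow oscillation in the sense $(0,1)$ from q-boundedness of $\{n(x_{mn}-x_{m,n-1})\}$. With both senses in hand and $x_{mn}\to x\ (\bar N,p,q;1,1)$ as hypothesized, Theorem \ref{(1,0)(0,1)tauber} immediately yields $x_{mn}\to x$, completing the proof.

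The step I expect to be the genuine obstacle is the uniformity hidden in the double-index slow oscillation. One cannot simply invoke Theorem \ref{boundedtoslowly} fibrewise: fixing $n$ and applying it to the single sequence $(x_{mn})_m$ only produces $\liminf_{m\to\infty}$ for each $n$, whereas the definitions of slow oscillation in the senses $(1,0)$ and $(0,1)$ demand the Pringsheim double $\liminf_{m,n\to\infty}$. The resolution is exactly that q-boundedness of a \emph{double} sequence (Definition \ref{q-bounded}) is a condition uniform in both $m$ and $n$, which is what allows the telescoping bound above to be taken independent of the indices; keeping track of this uniformity, rather than the telescoping algebra itself, is the crux.
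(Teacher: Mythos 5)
Your proposal is correct, and it follows the same route the paper intends: reduce to Theorem \ref{(1,0)(0,1)tauber} by showing that q-boundedness of the difference sequences forces slow oscillation in the senses $(1,0)$ and $(0,1)$. The paper itself offers no written proof beyond the sentence ``in view of the theorem above and Theorem \ref{boundedtoslowly}'', i.e.\ it leans on the \emph{single}-sequence result of Talo and Yavuz. Your closing observation is therefore well taken: applying Theorem \ref{boundedtoslowly} fibrewise in $n$ only yields, for each fixed $n$, slow oscillation of the column $(x_{mn})_m$ with $\lambda$ and $n_0$ possibly depending on $n$, which is not literally the Pringsheim double-limit condition in the definition of slow oscillation in the sense $(1,0)$; the uniformity must come from the fact that q-boundedness of the double sequence $\{m(x_{mn}-x_{m-1,n})\}$ is an infimum over both indices. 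Your telescoping estimate supplies exactly this, and it is in substance the special case $p_j\equiv 1$ (so $P_m=m+1$, regularly varying of index $\rho=1$, with $t/(\lambda^{\rho}-1)=t/(\lambda-1)$) of the paper's own Theorem \ref{(1,0)}, whose proof runs the identical computation with general weights; the $(0,1)$ case is Theorem \ref{(0,1)}. So an equally acceptable one-line proof would be to invoke Theorems \ref{(1,0)} and \ref{(0,1)} with $p_j=q_k=1$ rather than Theorem \ref{boundedtoslowly}. The only cosmetic point worth checking in your write-up is the harmless discrepancy between $m(x_{mn}-x_{m-1,n})$ and $(m+1)(x_{mn}-x_{m-1,n})=\frac{P_m}{p_m}(x_{mn}-x_{m-1,n})$, and the degenerate case $\lambda_m=m$, where the minimum over $m<j\le\lambda_m$ is over an empty set and the condition is vacuous.
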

We note that $(P_m)$ is regularly varying of index $\rho>0$ if\cite{chen}
\begin{eqnarray*}
\lim_{m\to\infty}\frac{P_{\lambda_m}}{P_m}=\lambda^{\rho}, \quad (\lambda>0).
\end{eqnarray*}
It is noted in \cite{chen}, $SV\!A_+$ contains all nonnegative sequences $p=(p_j)$ such that $(P_m)$ is regularly varying of positive index and
\begin{eqnarray}\label{varying}
\limsup_{m\to\infty}\frac{P_{\lambda_m}-P_m}{P_m}=\lambda^{\rho}-1.
\end{eqnarray}
\begin{theorem}\label{(1,0)}
Let $p=(p_j)$ be nonnegative sequence with $p_0>0$ and $(P_m)$ is regularly varying of positive indices. If $\left\{\frac{P_m}{p_m}(x_{mn}-x_{m-1,n})\right\}$ is q-bounded, then $(x_{mn})$ is slowly oscillating in the sense (1,0).
\end{theorem}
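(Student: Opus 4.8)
The plan is to reduce slow oscillation in the sense $(1,0)$ to the $q$-boundedness hypothesis by a telescoping argument combined with the sum inequalities (d) and (i). Write $u_{in}=\frac{P_i}{p_i}(x_{in}-x_{i-1,n})$, so that by hypothesis $\{u_{in}\}$ is $q$-bounded, i.e. $\inf_{a,b}\mu(u_{ab},s)\to 1$ and $\sup_{a,b}\nu(u_{ab},s)\to 0$ as $s\to\infty$ by Definition \ref{q-bounded}. For $m<j$ the telescoping identity
\[
x_{jn}-x_{mn}=\sum_{i=m+1}^{j}(x_{in}-x_{i-1,n})=\sum_{i=m+1}^{j}\frac{p_i}{P_i}\,u_{in}
\]
expresses the increment in terms of the $q$-bounded family with the explicit weights $p_i/P_i$.

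First I would record two elementary monotonicity facts: taking $y=\theta$ in (d) and in (i), together with (b) and (g), shows that $s\mapsto\mu(x,s)$ is nondecreasing and $s\mapsto\nu(x,s)$ is nonincreasing on $(0,\infty)$. Next, applying (d) repeatedly to the telescoping sum with the share $t_i=t\,(p_i/P_i)/S$, where $S=\sum_{l=m+1}^{j}p_l/P_l$, and using the scalar rule (c) in the form $\mu(\tfrac{p_i}{P_i}u_{in},t_i)=\mu(u_{in},t_iP_i/p_i)=\mu(u_{in},t/S)$, I obtain
\[
\mu(x_{jn}-x_{mn},t)\ge\min_{m<i\le j}\mu\!\left(u_{in},\tfrac{t}{S}\right),\qquad
\nu(x_{jn}-x_{mn},t)\le\max_{m<i\le j}\nu\!\left(u_{in},\tfrac{t}{S}\right),
\]
the point being that the common threshold $t/S$ no longer depends on $i$.

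The key estimate is a bound on $S$. Since $-\ln(1-x)\ge x$ for $x\in[0,1)$, taking $x=p_l/P_l$ gives $p_l/P_l\le\ln(P_l/P_{l-1})$, and summing telescopically for $m<l\le j\le\lambda_m$ yields $S\le\ln(P_{\lambda_m}/P_m)$. Because $(P_m)$ is regularly varying of index $\rho>0$, $P_{\lambda_m}/P_m\to\lambda^{\rho}$, hence $\ln(P_{\lambda_m}/P_m)\to\rho\ln\lambda$; so for each fixed $\lambda>1$ there is $M(\lambda)$ with $S\le 2\rho\ln\lambda$ whenever $m\ge M(\lambda)$, uniformly in $n$ and in $m<j\le\lambda_m$. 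Combining this with the two displayed inequalities and the monotonicity of $\mu,\nu$ gives, for $m\ge M(\lambda)$,
\[
\min_{m<j\le\lambda_m}\mu(x_{jn}-x_{mn},t)\ge\inf_{a,b}\mu\!\left(u_{ab},\tfrac{t}{2\rho\ln\lambda}\right),\qquad
\max_{m<j\le\lambda_m}\nu(x_{jn}-x_{mn},t)\le\sup_{a,b}\nu\!\left(u_{ab},\tfrac{t}{2\rho\ln\lambda}\right).
\]
Taking $\liminf_{m,n}$ (resp. $\limsup_{m,n}$) and then $\sup_{\lambda>1}$ (resp. $\inf_{\lambda>1}$), and letting $\lambda\to 1^{+}$ so that $t/(2\rho\ln\lambda)\to\infty$, the $q$-boundedness of $\{u_{ab}\}$ forces the two sides to $1$ and $0$ respectively, which is exactly slow oscillation in the sense $(1,0)$.

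I expect the main obstacle to be the passage through the iterated limits at the end: one must observe that $\inf_{a,b}\mu(u_{ab},t/(2\rho\ln\lambda))$ is monotone as $\lambda\to 1^{+}$, so that its supremum over $\lambda>1$ is the limit $\lambda\to1^{+}$, which is where $q$-boundedness is actually used. The logarithmic comparison $p_l/P_l\le\ln(P_l/P_{l-1})$ and its interaction with the regular-variation limit (cf. \eqref{varying}) is the other step that requires care, since it is what converts the weighted telescoping sum into a threshold $t/S$ that is uniformly controlled by $\lambda$.
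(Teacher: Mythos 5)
Your proof is correct, and it follows the same overall skeleton as the paper's: telescope $x_{jn}-x_{mn}$ into first differences, apportion the parameter $t$ among the summands so that property (d) (resp.\ (i)) reduces everything to a minimum (resp.\ maximum) over single differences, use (c) to pass to the q-bounded quantities $\frac{P_r}{p_r}(x_{rn}-x_{r-1,n})$, and then use regular variation to show the resulting threshold blows up as $\lambda\to1^{+}$. The difference is in the apportionment. The paper gives the $r$-th term the share $\frac{p_r}{P_j-P_m}\,t$, so that (c) produces the threshold $\frac{P_r}{P_j-P_m}\,t\geq\frac{P_m}{P_{\lambda_m}-P_m}\,t$, and then invokes \eqref{varying} to bound $\frac{P_{\lambda_m}-P_m}{P_m}$ by $\lambda^{\rho}-1$ for large $m$; choosing $\lambda$ with $\lambda^{\rho}-1<t/M_\varepsilon$ finishes the argument. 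You instead give shares proportional to $p_i/P_i$, which makes the threshold $t/S$ exactly uniform in $i$, and you control $S=\sum_{l=m+1}^{j}p_l/P_l$ by the elementary inequality $p_l/P_l\leq\ln(P_l/P_{l-1})$, so that $S\leq\ln(P_{\lambda_m}/P_m)\to\rho\ln\lambda$. Your route needs only the limit $P_{\lambda_m}/P_m\to\lambda^{\rho}$ and not the identity \eqref{varying}, at the cost of the extra logarithmic comparison; the paper's route gets the uniform threshold by the cruder but more direct bound $P_r\geq P_m$, $P_j\leq P_{\lambda_m}$. Both thresholds ($t/(\lambda^{\rho}-1)$ versus $t/(2\rho\ln\lambda)$) tend to infinity as $\lambda\to1^{+}$, which is where q-boundedness is used, and your closing remark about the monotonicity of $s\mapsto\inf_{a,b}\mu(u_{ab},s)$ correctly identifies the point needed to convert $\sup_{\lambda>1}$ into the limit $\lambda\to1^{+}$. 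One shared caveat (present in the paper as well): the hypothesis implicitly requires $p_m>0$ for all $m$ for the quotients $P_m/p_m$ to make sense, so neither argument needs to worry about vanishing weights in the telescoping identity.
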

\begin{proof}
Let $\left\{\frac{P_m}{p_m}(x_{mn}-x_{m-1,n})\right\}$ be q-bounded.Then, for given $\varepsilon>0$ there exists $M_\varepsilon>0$ so that
\begin{equation*}
    t>M_\varepsilon \Rightarrow \inf_{m,n\in \mathbb{N}}\mu\left(\frac{P_m}{p_m}(x_{mn}-x_{m-1,n}),t\right)>1-\varepsilon \quad\textrm{and}\quad
    \sup_{m,n\in \mathbb{N}}\nu\left(\frac{P_m}{p_m}(x_{mn}-x_{m-1,n}),t\right)<\varepsilon.
\end{equation*}
For every $t>0$ choose $\lambda<\left(1+\frac{t}{M_\varepsilon}\right)^{1/\rho}$. Then for $n_0<m<j\leq\lambda_m$ we have
\begin{eqnarray*}
 \mu(x_{jn}-x_{mn},t)&=&\mu\left(\sum_{r=m+1}^{j}(x_{rn}-x_{r-1,n}),t\right)\\
 &\geq&\min_{m+1\leq r \leq j}\mu\left(x_{rn}-x_{r-1,n},\frac{p_r}{P_j-P_m}t\right)\\
  &=&\min_{m+1\leq r \leq j}\mu\left(\frac{P_r}{p_r}(x_{rn}-x_{r-1,n}),\frac{P_r}{P_j-P_m}t\right)\\
 &\geq&\min_{m+1\leq r \leq j}\mu\left(\frac{P_r}{p_r}(x_{rn}-x_{r-1,n}),\frac{P_m}{P_{\lambda_m}-P_m}t\right)\\
  &=&\min_{m+1\leq r \leq j}\mu\left(\frac{P_r}{p_r}(x_{rn}-x_{r-1,n}),\frac{t}{\frac{P_{\lambda_m}-P_m}{P_m}}\right)\\
   &\geq&\min_{m+1\leq r \leq j}\mu\left(\frac{P_r}{p_r}(x_{rn}-x_{r-1,n}),\frac{t}{\lambda^{\rho}-1}\right)\\
    &\geq&\inf_{m,n\in \mathbb{N}}\mu\left(\frac{P_m}{p_m}(x_{mn}-x_{m-1,n}),\frac{t}{\lambda^{\rho}-1}\right)\\
    &>&1-\varepsilon
\end{eqnarray*}
and
\begin{eqnarray*}
\nu(x_{jn}-x_{mn},t)<\sup_{m,n\in \mathbb{N}}\nu\left(\frac{P_m}{p_m}(x_{mn}-x_{m-1,n}),\frac{t}{\lambda^{\rho}-1}\right)<\varepsilon.
\end{eqnarray*}
in view of \eqref{varying}. Hence, $(x_{mn})$ is slowly oscillating in the sense (1,0).
\end{proof}
Similarly we can give next theorem for slow oscillation in the sense (0,1).
\begin{theorem}\label{(0,1)}
Let $q=(q_k)$ be nonnegative sequence with $q_0>0$ and $(Q_n)$ is regularly varying of positive indices. If $\left\{\frac{Q_n}{q_n}(x_{m,n}-x_{m,n-1})\right\}$ is q-bounded, then $(x_{mn})$ is slowly oscillating in the sense (0,1).
\end{theorem}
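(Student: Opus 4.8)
The plan is to reproduce the argument of Theorem~\ref{(1,0)} essentially verbatim, with the roles of the two indices interchanged: I would replace $p$ by $q$, $P_m$ by $Q_n$, the normalized factor $\frac{P_m}{p_m}$ by $\frac{Q_n}{q_n}$, and the first-index difference $x_{mn}-x_{m-1,n}$ by the second-index difference $x_{m,n}-x_{m,n-1}$. Let $\rho>0$ denote the index of regular variation of $(Q_n)$, so that the analogue of~\eqref{varying}, namely $\limsup_{n\to\infty}\frac{Q_{\lambda_n}-Q_n}{Q_n}=\lambda^{\rho}-1$, holds for every $\lambda>0$ by the same reasoning used for $(P_m)$.

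First I would invoke the q-boundedness of $\left\{\frac{Q_n}{q_n}(x_{m,n}-x_{m,n-1})\right\}$: for given $\varepsilon>0$ there exists $M_\varepsilon>0$ such that $t>M_\varepsilon$ forces $\inf_{m,n\in\mathbb{N}}\mu\left(\frac{Q_n}{q_n}(x_{m,n}-x_{m,n-1}),t\right)>1-\varepsilon$ and $\sup_{m,n\in\mathbb{N}}\nu\left(\frac{Q_n}{q_n}(x_{m,n}-x_{m,n-1}),t\right)<\varepsilon$. For each fixed $t>0$ I would then choose $\lambda$ with $1<\lambda<\left(1+\frac{t}{M_\varepsilon}\right)^{1/\rho}$, which guarantees $\frac{t}{\lambda^{\rho}-1}>M_\varepsilon$.

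Next, for indices $n_0<n<k\leq\lambda_n$ and arbitrary fixed $m$, I would telescope $x_{mk}-x_{mn}=\sum_{s=n+1}^{k}(x_{m,s}-x_{m,s-1})$ and apply axioms (c) and (d) of the $IF$-norm exactly as in Theorem~\ref{(1,0)} to obtain
\[
\mu(x_{mk}-x_{mn},t)\ge\min_{n+1\le s\le k}\mu\!\left(\frac{Q_s}{q_s}(x_{m,s}-x_{m,s-1}),\frac{Q_s}{Q_k-Q_n}\,t\right).
\]
Using $Q_s\ge Q_n$ for $s>n$ and $Q_k-Q_n\le Q_{\lambda_n}-Q_n$ for $k\le\lambda_n$, together with monotonicity of $\mu$ in its second argument, the $t$-argument is at least $\frac{Q_n}{Q_{\lambda_n}-Q_n}\,t=\frac{t}{(Q_{\lambda_n}-Q_n)/Q_n}$, and in view of the regular-variation relation this is at least $\frac{t}{\lambda^{\rho}-1}$. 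Since $\frac{t}{\lambda^{\rho}-1}>M_\varepsilon$ by the choice of $\lambda$, the q-boundedness bound yields $\mu(x_{mk}-x_{mn},t)\ge\inf_{m,n}\mu\left(\frac{Q_n}{q_n}(x_{m,n}-x_{m,n-1}),\frac{t}{\lambda^{\rho}-1}\right)>1-\varepsilon$, and symmetrically $\nu(x_{mk}-x_{mn},t)<\varepsilon$. This is precisely slow oscillation in the sense (0,1).

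There is essentially no new obstacle here, as the argument is the mirror image of Theorem~\ref{(1,0)}; the only point demanding mild care is that q-boundedness furnishes a bound that is uniform in both indices, so that fixing $m$ and telescoping only over the second index $s$ from $n+1$ to $k$ still lands inside the same uniform estimate $\inf_{m,n}\mu\left(\frac{Q_n}{q_n}(x_{m,n}-x_{m,n-1}),\cdot\right)$. This uniformity in $m$ is exactly what makes the $\liminf_{m,n\to\infty}$ and $\min_{n<k\le\lambda_n}$ in the definition of slow oscillation in the sense (0,1) come out as required.
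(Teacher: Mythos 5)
Your proposal is correct and follows exactly the route the paper intends: the paper omits this proof, stating only that it is obtained from Theorem~\ref{(1,0)} by interchanging the roles of the two indices, and your index-swapped telescoping argument with the choice $1<\lambda<\left(1+\frac{t}{M_\varepsilon}\right)^{1/\rho}$ reproduces that argument faithfully. Your closing remark about the uniformity in $m$ of the q-boundedness bound is the one point worth making explicit, and you have handled it correctly.
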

In view of Theorem \ref{(1,0)(0,1)tauber}, Theorem \ref{(1,0)} and Theorem \ref{(0,1)} we get following theorem.
\begin{theorem}
Let $p=(p_j)$ and $q=(q_k)$ be nonnegative sequences with $p_0>0, q_0>0$, and $(P_m)$ and $(Q_n)$ be regularly varying of positive indices. If $x_{mn}\to x\ (\bar{N},p,q;1,1)$ and sequences \linebreak$\left\{\frac{P_m}{p_m}(x_{mn}-x_{m-1,n})\right\}$,$\left\{\frac{Q_n}{q_n}(x_{m,n}-x_{m,n-1})\right\}$ are q-bounded, then $x_{mn}\to x$.
\end{theorem}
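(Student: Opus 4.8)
The plan is to assemble the conclusion directly from the three results that immediately precede it, since the hypotheses have been arranged so as to feed each of them in turn. The only genuine preliminary step is to observe that regular variation of positive index is a sufficient condition for membership in $SV\!A_+$: as noted following \cite{chen} around \eqref{varying}, every nonnegative sequence $p=(p_j)$ with $p_0>0$ for which $(P_m)$ is regularly varying of positive index lies in $SV\!A_+$, and likewise for $q$ and $(Q_n)$. Thus from the hypothesis that $(P_m)$ and $(Q_n)$ are regularly varying of positive indices I would first record that $p,q\in SV\!A_+$, which is precisely the standing assumption demanded by Theorem \ref{(1,0)(0,1)tauber}.

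Next I would extract the two one-directional slow oscillation properties. Applying Theorem \ref{(1,0)} to the sequence $\left\{\frac{P_m}{p_m}(x_{mn}-x_{m-1,n})\right\}$, whose q-boundedness is assumed, together with the regular variation of $(P_m)$, yields that $(x_{mn})$ is slowly oscillating in the sense $(1,0)$. Symmetrically, applying Theorem \ref{(0,1)} to $\left\{\frac{Q_n}{q_n}(x_{m,n}-x_{m,n-1})\right\}$ with the regular variation of $(Q_n)$ gives slow oscillation in the sense $(0,1)$.

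Finally, having both slow oscillation in the senses $(1,0)$ and $(0,1)$, the membership $p,q\in SV\!A_+$, and the assumed summability $x_{mn}\to x\ (\bar{N},p,q;1,1)$, I would invoke Theorem \ref{(1,0)(0,1)tauber} to conclude $x_{mn}\to x$, completing the argument.

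Since every step is a direct citation of an already-established result, I do not anticipate a substantive obstacle. The only point requiring care is the logical bridge in the first step, namely verifying that ``regularly varying of positive index'' is indeed enough to place $p$ and $q$ in $SV\!A_+$, so that Theorem \ref{(1,0)(0,1)tauber} applies even though the present hypotheses are phrased in terms of regular variation rather than directly in terms of $SV\!A_+$. As this is exactly the fact recorded from \cite{chen} surrounding \eqref{varying}, no new work is needed, and the proof is essentially a synthesis of the preceding theorems.
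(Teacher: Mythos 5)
Your proposal is correct and coincides with the paper's own (implicitly sketched) argument: the paper derives this theorem precisely by combining Theorem \ref{(1,0)} and Theorem \ref{(0,1)} to obtain slow oscillation in the senses $(1,0)$ and $(0,1)$, then applying Theorem \ref{(1,0)(0,1)tauber}, using the remark around \eqref{varying} that regular variation of positive index places $p$ and $q$ in $SV\!A_+$. No differences worth noting.
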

\section{Results for $(\bar{N},p,*;1,0)$ summability in $IFNS$}
Similar to the results of the previous section we can give results for $(\bar{N},p,*;1,0)$ summability of double sequences in $IFNS$. The proofs are also similar, hence they are omitted.

Similar to Theorem \ref{regular}, we give the the next theorem.
\begin{theorem}\label{regular1}
If double sequence $(x_{mn})$ in $(V,\mu,\nu)$ is q-bounded and convergent to $x\in V$, then $(x_{mn})$ is $(\bar{N},p,*;1,0)$ summable to $x$.
\end{theorem}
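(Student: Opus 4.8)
The plan is to mirror the proof of Theorem~\ref{regular}, using that $(\bar N,p,*;1,0)$ summability is built from a single weighted average over the first index. Writing
$$
t^{10}_{mn}-x=\frac{1}{P_m}\sum_{j=0}^{m}p_j(x_{jn}-x),
$$
I would fix $t>0$ and $\varepsilon\in(0,1)$, take $n_0$ from Definition~\ref{convergence} applied with $t\leftrightarrow t/2$, and split at $j=n_0$ into a head $\sum_{j=0}^{n_0}$ and a tail $\sum_{j=n_0+1}^{m}$. By property (d),
$$
\mu\!\left(t^{10}_{mn}-x,t\right)=\mu\!\left(\sum_{j=0}^{m}p_j(x_{jn}-x),P_mt\right)\ge\min\left\{\mu\!\left(\sum_{j=0}^{n_0}p_j(x_{jn}-x),\tfrac{P_mt}{2}\right),\,\mu\!\left(\sum_{j=n_0+1}^{m}p_j(x_{jn}-x),\tfrac{P_mt}{2}\right)\right\},
$$
and the symmetric estimate with $\max$ in place of $\min$ holds for $\nu$.

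For the tail I would invoke the weighted-average inequality $\mu\bigl(\sum_j p_j y_j,(\sum_j p_j)t\bigr)\ge\min_j\mu(y_j,t)$ (obtained by iterating (c) and (d)) together with $P_m\ge\sum_{j=n_0+1}^{m}p_j$ and the monotonicity of $t\mapsto\mu(x,t)$ (a consequence of (b) and (d)); since both $j>n_0$ and $n>n_0$, convergence gives $\mu(x_{jn}-x,t/2)>1-\varepsilon$, so the tail term exceeds $1-\varepsilon$ whenever $n>n_0$. For the head, where $j\le n_0$ but $n$ may be large, I would bound it below by $\inf_{j,n}\mu\bigl(x_{jn}-x,\tfrac{P_mt}{2(n_0+1)B}\bigr)$ with $B=\max_{0\le j\le n_0}p_j$; since $P_m\to\infty$ the argument tends to $\infty$, so q-boundedness of $(x_{mn}-x)$ forces this infimum to $1$, producing some $n_1$ beyond which the head term exceeds $1-\varepsilon$. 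That $(x_{mn}-x)$ inherits q-boundedness from $(x_{mn})$ follows from (d) and $\lim_{t\to\infty}\mu(x,t)=1$. Combining both estimates for $m>\max\{n_0,n_1\}$ and $n>n_0$, and arguing identically for $\nu$, yields $t^{10}_{mn}\to x$.

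The one genuine subtlety --- and the reason q-boundedness cannot be discarded --- is the head: for a fixed small $j$ and large $n$, Pringsheim convergence says nothing about $x_{jn}-x$, so the head terms are not individually small and must instead be controlled by the growth of $P_m$ against a uniform bound, which is exactly what q-boundedness supplies. This also clarifies the remark preceding Section~2: in the degenerate case $x_{jk}=x_j$ the head $\sum_{j=0}^{n_0}p_j(x_j-x)$ is a single fixed vector, so dividing by $P_m\to\infty$ already sends it to $\theta$ and no boundedness hypothesis is needed.
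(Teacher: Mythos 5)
Your proposal is correct and is exactly the adaptation the paper intends: the paper omits this proof, stating it is "similar to Theorem \ref{regular}", and your argument reproduces that proof's structure for the single-index mean, with the natural two-way split (head controlled by q-boundedness via $P_m\to\infty$, tail by convergence) replacing the three-way split used for $t^{11}_{mn}$. Your closing observations on why q-boundedness is indispensable for the head, and why it becomes redundant when $x_{jk}=x_j$, correctly explain the remark the authors make at the end of Section 1.
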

Similar to Theorem \ref{theorem}, we give the next theorem.
\begin{theorem}
Let $p\in SV\!A_+$ and double sequence $(x_{mn})$ be in $(V,\mu,\nu)$. Assume that $x_{mn}\to x\ (\bar{N},p,*;1,0)$. Then, $x_{mn}\to x$ if and only if for all $t>0$
\begin{eqnarray*}
\sup_{\lambda>1}\liminf_{m,n\rightarrow\infty}\mu\left(\frac{1}{P_{\lambda_m}-P_m}\sum_{j=m+1}^{\lambda_m}p_j(x_{jn}-x_{mn}),t\right)=1
\end{eqnarray*}
and
\begin{eqnarray*}
\inf_{\lambda>1}\limsup_{m,n\rightarrow\infty}\nu\left(\frac{1}{P_{\lambda_m}-P_m}\sum_{j=m+1}^{\lambda_m}p_j(x_{jn}-x_{mn}),t\right)=0.
\end{eqnarray*}
\end{theorem}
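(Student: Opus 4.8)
The plan is to mimic the proof of Theorem \ref{theorem}, with the simplification that $(\bar{N},p,*;1,0)$ summability averages only in the first index, so the spectator index $n$ merely rides along. First I would establish the $(1,0)$ analogue of the basic identity \eqref{basiceq}. Using $P_m t^{10}_{mn}=\sum_{j=0}^m p_j x_{jn}$ and $\sum_{j=m+1}^{\lambda_m}p_j=P_{\lambda_m}-P_m$, a direct rearrangement gives, for $\lambda>1$ and $m$ large enough that $\lambda_m>m$,
\[
\frac{1}{P_{\lambda_m}-P_m}\sum_{j=m+1}^{\lambda_m}p_j(x_{jn}-x_{mn})
=\bigl(t^{10}_{\lambda_m,n}-x_{mn}\bigr)+\frac{1}{\frac{P_{\lambda_m}}{P_m}-1}\bigl(t^{10}_{\lambda_m,n}-t^{10}_{mn}\bigr).
\]
This is the one–dimensional counterpart of \eqref{basiceq}: it carries only two terms on the right instead of four, since no averaging occurs in the $k$ direction. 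Getting this identity right, together with the observation that $\lambda_m\ge m$ forces $t^{10}_{\lambda_m,n}\to x$ whenever $t^{10}_{mn}\to x$, is the one genuinely new step; everything else transcribes the earlier argument.

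For \emph{necessity}, assume $x_{mn}\to x$. Since $x_{mn}\to x\ (\bar{N},p,*;1,0)$ means $t^{10}_{mn}\to x$, and $\lambda_m\ge m$, both $t^{10}_{\lambda_m,n}\to x$ and $t^{10}_{mn}\to x$, so their difference tends to $\theta$ by the Cauchy estimates recorded after Definition \ref{convergence}. Hence the first bracket $t^{10}_{\lambda_m,n}-x_{mn}$ tends to $\theta$ because both $t^{10}_{\lambda_m,n}$ and $x_{mn}$ converge to $x$; and for the second bracket I would invoke $p\in SV\!A_+$ exactly as in \eqref{needed1}, estimating
\[
\mu\!\left(\tfrac{1}{\frac{P_{\lambda_m}}{P_m}-1}(t^{10}_{\lambda_m,n}-t^{10}_{mn}),t\right)\ge \mu\!\left(t^{10}_{\lambda_m,n}-t^{10}_{mn},\Bigl\{\liminf_{m\to\infty}\bigl|\tfrac{P_{\lambda_m}}{P_m}-1\bigr|\Bigr\}t\right)\to 1,
\]
with the symmetric estimate for $\nu$. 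Combining these via properties (d) and (i) of the IF–norm shows the left–hand average tends to $\theta$, so for every $\lambda>1$ the two Tauberian limits already equal $1$ and $0$; this a fortiori yields the required values of $\sup_{\lambda>1}\liminf$ and $\inf_{\lambda>1}\limsup$.

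For \emph{sufficiency}, assume the two Tauberian conditions and fix $t>0$ and $\varepsilon\in(0,1)$. The conditions furnish a single $\lambda>1$ and a threshold past which the left–hand average is $\mu$–close to $\theta$ (resp. $\nu$–small) at level $t/3$; convergence of $t^{10}_{mn}$ gives the same control on $t^{10}_{\lambda_m,n}-x$; and the adapted form of \eqref{needed1} controls the correction term $\frac{1}{\frac{P_{\lambda_m}}{P_m}-1}(t^{10}_{\lambda_m,n}-t^{10}_{mn})$ at level $t/3$. Solving the identity for $x_{mn}$ and applying property (d) to the three resulting summands (using $\mu(-y,t)=\mu(y,t)$ from property (c)) yields $\mu(x_{mn}-x,t)>1-\varepsilon$, and symmetrically $\nu(x_{mn}-x,t)<\varepsilon$, for all large $m,n$; hence $x_{mn}\to x$. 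The only point needing care is that, unlike the five–fold split in Theorem \ref{theorem}, here a three–fold split into $t/3$ suffices, because the $(1,0)$ identity has just three terms. I do not expect a serious obstacle: once the two–term identity is in place the argument is strictly easier than the $(1,1)$ case, there being no mixed $P$–$Q$ cross terms to control.
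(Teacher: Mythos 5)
Your proposal is correct and follows essentially the same route as the paper: the paper's own proof simply says to repeat the argument of Theorem \ref{theorem} with the two-term identity $\frac{1}{P_{\lambda_m}-P_m}\sum_{j=m+1}^{\lambda_m}p_j(x_{jn}-x_{mn})=t^{10}_{\lambda_m,n}-x_{mn}+\frac{1}{(P_{\lambda_m}/P_m)-1}(t^{10}_{\lambda_m,n}-t^{10}_{m,n})$ in place of \eqref{basiceq}, which is exactly the identity you derive and exploit. Your observation that a three-fold split at level $t/3$ suffices (rather than $t/5$) is a correct and harmless refinement of the same argument.
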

\begin{proof}
The proof is done in a similar way that in the proof of Theorem \ref{theorem} by using the equation(see \cite[Equation (3.13)]{chen})
\begin{eqnarray*}
\frac{1}{P_{\lambda_m}-P_m}\sum_{j=m+1}^{\lambda_m}p_j(x_{jn}-x_{mn})=t^{10}_{\lambda_m,n}-x_{mn}+\frac{1}{(P_{\lambda_m}/P_m)-1}(t^{10}_{\lambda_m,n}-t^{10}_{m,n})\qquad (\lambda>1; \lambda_m>m)
\end{eqnarray*}
instead of the equation \eqref{basiceq}.
\end{proof}
In view of the equation(see \cite[Equation (3.14)]{chen})
\begin{eqnarray*}
\frac{1}{P_{m}-P_{\lambda_m}}\sum_{j=\lambda_m+1}^{m}p_j(x_{jn}-x_{mn})=t^{10}_{m,n}-x_{mn}+\frac{1}{(P_{m}/P_{\lambda_m})-1}(t^{10}_{m,n}-t^{10}_{\lambda_m,n})\qquad (0<\lambda<1; \lambda_m<m)
\end{eqnarray*}
we can give the next theorem as analogue of Theorem \ref{theorem2}.
\begin{theorem}
Let $p\in SV\!A_+$ and double sequence $(x_{mn})$ be in $(V,\mu,\nu)$. Assume that $x_{mn}\to x\ (\bar{N},p,*;1,0)$. Then, $x_{mn}\to x$ if and only if for all $t>0$
\begin{eqnarray*}
\sup_{0<\lambda<1}\liminf_{m,n\rightarrow\infty}\mu\left(\frac{1}{P_m-P_{\lambda_m}}\sum_{j=\lambda_m+1}^{m}p_j(x_{mn}-x_{jn}),t\right)=1
\end{eqnarray*}
and
\begin{eqnarray*}
\inf_{0<\lambda<1}\limsup_{m,n\rightarrow\infty}\nu\left(\frac{1}{P_m-P_{\lambda_m}}\sum_{j=\lambda_m+1}^{m}p_j(x_{mn}-x_{jn}),t\right)=0.
\end{eqnarray*}
\end{theorem}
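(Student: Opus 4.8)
The plan is to mirror the argument of Theorem \ref{theorem2}, now driven by the one-dimensional identity quoted just above the statement (Chen's Equation (3.14)),
\[
\frac{1}{P_{m}-P_{\lambda_m}}\sum_{j=\lambda_m+1}^{m}p_j(x_{jn}-x_{mn})=t^{10}_{m,n}-x_{mn}+\frac{1}{(P_{m}/P_{\lambda_m})-1}\left(t^{10}_{m,n}-t^{10}_{\lambda_m,n}\right),\qquad(0<\lambda<1;\ \lambda_m<m),
\]
together with the observation that conditions (c) and (h) with $c=-1$ give $\mu(-z,t)=\mu(z,t)$ and $\nu(-z,t)=\nu(z,t)$, so the sign difference between $x_{mn}-x_{jn}$ in the Tauberian sums and $x_{jn}-x_{mn}$ in the identity is immaterial. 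Because only the index $j$ is averaged here, the right-hand decomposition has just two correction summands rather than the four appearing in \eqref{basiceq}, so the whole proof is shorter than that of Theorem \ref{theorem}.

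First I would record the auxiliary limit analogous to \eqref{needed1}, adapted to $0<\lambda<1$ and the $(1,0)$ method. Membership $p\in SV\!A_+$ gives $\liminf_{m}|P_{\lambda_m}/P_m-1|>0$; since $P_{\lambda_m}\le P_m$ forces $P_{\lambda_m}/P_m$ to stay bounded below $1$, the reciprocal quantity $P_m/P_{\lambda_m}$ stays bounded above $1$, so $\liminf_{m}|(P_m/P_{\lambda_m})-1|>0$ as well. Combining the homogeneity axiom $\mu(cz,t)=\mu(z,t/|c|)$ with $t^{10}_{mn}\to x$ — and noting $\lambda_m=[\lambda m]\to\infty$, so that $t^{10}_{\lambda_m,n}\to x$ and hence $t^{10}_{m,n}-t^{10}_{\lambda_m,n}\to\theta$ — I get
\[
\mu\!\left(\frac{1}{(P_m/P_{\lambda_m})-1}\left(t^{10}_{m,n}-t^{10}_{\lambda_m,n}\right),t\right)\ge\mu\!\left(t^{10}_{m,n}-t^{10}_{\lambda_m,n},\Big\{\liminf_{m}\big|(P_m/P_{\lambda_m})-1\big|\Big\}t\right)\to 1
\]
as $\min\{m,n\}\to\infty$, with the dual estimate $\nu(\cdots)\to 0$.

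For \emph{necessity}, assuming $x_{mn}\to x$, both terms on the right of the identity tend to $\theta$ in $IFNS$: one has $t^{10}_{m,n}-x_{mn}=(t^{10}_{m,n}-x)-(x_{mn}-x)\to\theta$, while the correction term vanishes by the auxiliary limit. Hence for each fixed $\lambda\in(0,1)$ the averaged difference tends to $\theta$, so its $\mu$-value has $\liminf=1$ and its $\nu$-value has $\limsup=0$; passing to $\sup_{0<\lambda<1}$ and $\inf_{0<\lambda<1}$ preserves these values and yields the two displayed conditions. For \emph{sufficiency}, I would fix $t>0$ and $\varepsilon\in(0,1)$ and choose parameters in order: by the $\sup$--$\liminf$ condition select $\lambda\in(0,1)$ and $n_0$ so that the averaged difference $W_{mn}^{\lambda}$ satisfies $\mu(W_{mn}^{\lambda},t/3)>1-\varepsilon$ and $\nu(W_{mn}^{\lambda},t/3)<\varepsilon$ for $m,n>n_0$; by $t^{10}_{mn}\to x$ select $n_1$; and by the auxiliary limit applied to this now-fixed $\lambda$ select $n_2$, controlling the correction summand at level $t/3$. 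Writing $x_{mn}-x$ as the combination of these three pieces dictated by the identity (up to signs, which do not affect $\mu,\nu$) and applying condition (d) for $\mu$ and condition (i) for $\nu$ with the splitting $t=t/3+t/3+t/3$ gives $\mu(x_{mn}-x,t)>1-\varepsilon$ and $\nu(x_{mn}-x,t)<\varepsilon$ for $m,n>\max\{n_0,n_1,n_2\}$, so $x_{mn}\to x$.

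The only genuine obstacle I expect is the bookkeeping around the scaling factor $1/((P_m/P_{\lambda_m})-1)$: one must verify that $SV\!A_+$, which is stated through $P_{\lambda_m}/P_m$, actually keeps this reciprocal bounded in the regime $0<\lambda<1$, and then route that bound through the homogeneity axiom as in the displayed auxiliary estimate. Everything else is a term-by-term transcription of the proof of Theorem \ref{theorem}, with the $(1,1)$-decomposition replaced by the two-summand $(1,0)$-decomposition and the range $\lambda>1$ replaced by $0<\lambda<1$.
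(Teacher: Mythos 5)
Your proposal is correct and follows exactly the route the paper intends: it invokes Chen's identity (3.14) for the $(1,0)$ means and transcribes the proof of Theorem \ref{theorem} (via Theorem \ref{theorem2}) with the two-summand decomposition, which is all the paper itself does, since it omits the proof as "similar." Your extra care in checking that $p\in SV\!A_+$ keeps $(P_m/P_{\lambda_m})-1$ bounded away from $0$ for $0<\lambda<1$, and that $\mu(-z,t)=\mu(z,t)$ handles the sign reversal, fills in details the paper leaves implicit but does not change the approach.
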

Similar to Theorem \ref{slowtauber}, we give the next theorem.
\begin{theorem}
Let $p\in SV\!A_+$ and $x\in V$. If double sequence $(x_{mn})$ in $(V,\mu,\nu)$ is slowly oscillating in the sense (1,0) and $x_{mn}\to x\ (\bar{N},p,*;1,0)$, then $x_{mn}\to x$.
\end{theorem}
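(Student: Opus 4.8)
The plan is to follow the proof of Theorem~\ref{slowtauber} verbatim in structure, replacing only the two-dimensional averaging block by the one-dimensional block that governs $(\bar{N},p,*;1,0)$ summability. Thus I would assume $x_{mn}\to x\ (\bar{N},p,*;1,0)$ together with slow oscillation in the sense $(1,0)$, and aim to verify the two Tauberian conditions appearing in the $(\bar{N},p,*;1,0)$ analogue of Theorem~\ref{theorem} proved above. Once those hold for every $t>0$, that characterization theorem delivers $x_{mn}\to x$ immediately.

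Concretely, fix $t>0$ and let $\varepsilon\in(0,1)$. By the equivalent formulation of slow oscillation in the sense $(1,0)$ there exist $\lambda>1$ and $n_0\in\mathbb{N}$ such that
\[
\mu(x_{jn}-x_{mn},t)>1-\varepsilon\quad\text{and}\quad \nu(x_{jn}-x_{mn},t)<\varepsilon
\]
for all $m,n\ge n_0$ and all $j$ with $m<j\le\lambda_m$. I would stress that this bound is uniform in the second index $n$, which is precisely what makes the Pringsheim $\liminf_{m,n\to\infty}$ available in the target condition.

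The only genuinely analytic ingredient is to transfer these pointwise bounds to the weighted block. Since $\sum_{j=m+1}^{\lambda_m}\frac{p_j}{P_{\lambda_m}-P_m}=1$, the expression
\[
\frac{1}{P_{\lambda_m}-P_m}\sum_{j=m+1}^{\lambda_m}p_j(x_{jn}-x_{mn})
\]
is a convex combination of the differences $x_{jn}-x_{mn}$. Applying the homogeneity axiom~(c) to each summand and then the triangle-type axiom~(d) (and their $\nu$-counterparts (h),(i)), exactly as in the proof of Theorem~\ref{slowtauber}, yields
\[
\mu\!\left(\frac{1}{P_{\lambda_m}-P_m}\sum_{j=m+1}^{\lambda_m}p_j(x_{jn}-x_{mn}),t\right)\ge\min_{m<j\le\lambda_m}\mu(x_{jn}-x_{mn},t),
\]
together with the reversed inequality with $\max$ for $\nu$.

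Combining this with the slow-oscillation estimates, the left-hand side exceeds $1-\varepsilon$ (and the $\nu$-block stays below $\varepsilon$) for all $m,n>n_0$, so for this $\lambda$ the $\liminf_{m,n\to\infty}$ of the $\mu$-block is at least $1-\varepsilon$ while the $\limsup_{m,n\to\infty}$ of the $\nu$-block is at most $\varepsilon$. Letting $\varepsilon\to0$ verifies both Tauberian conditions for every $t>0$, and the $(\bar{N},p,*;1,0)$ characterization theorem then gives $x_{mn}\to x$. I expect no real obstacle beyond the convex-combination inequality; the one point requiring care is the uniformity in $n$ noted above, so that the bound on the weighted block is genuinely independent of the second index and the two-index limits can be taken as required.
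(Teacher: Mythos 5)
Your proposal is correct and follows exactly the route the paper intends: the paper omits this proof, stating it is similar to that of Theorem~\ref{slowtauber}, and your argument is precisely that adaptation --- use the equivalent $\varepsilon$--$\lambda$ formulation of slow oscillation in the sense $(1,0)$, bound the weighted block by $\min_{m<j\le\lambda_m}\mu(x_{jn}-x_{mn},t)$ (and by the corresponding $\max$ for $\nu$) via the convex-combination inequality, and invoke the $(\bar{N},p,*;1,0)$ analogue of Theorem~\ref{theorem}. Your explicit remark on uniformity in the second index $n$ is a worthwhile point of care that the paper leaves implicit.
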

In view of the thoerem above and Theorem \ref{boundedtoslowly}, Theorem \ref{(1,0)} we give the next theorems.
\begin{theorem}
Let $p\in SV\!A_+$ and $(x_{mn})$ be in $(V,\mu,\nu)$. If $x_{mn}\to x\ (\bar{N},p,*;1,0)$ and sequence $\left\{m(x_{mn}-x_{m-1,n})\right\}$ is q-bounded, then $x_{mn}\to x$.
\end{theorem}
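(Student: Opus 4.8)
The plan is to derive the statement from the two results that immediately precede it, in exact analogy with the final theorem of the $(\bar N,p,q;1,1)$ section. First I would show that q-boundedness of $\{m(x_{mn}-x_{m-1,n})\}$ forces $(x_{mn})$ to be slowly oscillating in the sense $(1,0)$; then the preceding theorem---slow oscillation in the sense $(1,0)$ together with $(\bar N,p,*;1,0)$ summability implies convergence---gives $x_{mn}\to x$ at once.

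For the first and substantive step I would imitate the proof of Theorem \ref{(1,0)}, replacing $(P_m)$ by the Ces\`{a}ro weights $P_m^{(C)}=m+1$ (regularly varying of index $1$), which reduces $P_m^{(C)}/p_m^{(C)}$ to $m+1$; equivalently this is the double-sequence, uniform version of Theorem \ref{boundedtoslowly}. Fix $t>0$ and $\varepsilon\in(0,1)$. By Definition \ref{q-bounded}, q-boundedness supplies an $M_\varepsilon>0$ such that $\inf_{m,n}\mu(m(x_{mn}-x_{m-1,n}),\tau)>1-\varepsilon$ and $\sup_{m,n}\nu(m(x_{mn}-x_{m-1,n}),\tau)<\varepsilon$ for all $\tau>M_\varepsilon$. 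Writing $x_{jn}-x_{mn}=\sum_{r=m+1}^{j}(x_{rn}-x_{r-1,n})$, distributing $t$ equally among the $j-m$ summands, and using axioms (c)--(d) for $\mu$ together with the monotonicity of $\mu(x,\cdot)$ in the second variable, I would obtain for $n_0<m<j\le\lambda_m$
\[
\mu(x_{jn}-x_{mn},t)\ \ge\ \min_{m<r\le j}\mu\!\left(r(x_{rn}-x_{r-1,n}),\tfrac{rt}{j-m}\right)\ \ge\ \inf_{m,n}\mu\!\left(m(x_{mn}-x_{m-1,n}),\tfrac{t}{\lambda-1}\right),
\]
because $\tfrac{rt}{j-m}\ge\tfrac{(m+1)t}{\lambda_m-m}>\tfrac{t}{\lambda-1}$ for all admissible $r$. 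Choosing $\lambda<1+t/M_\varepsilon$ makes $t/(\lambda-1)>M_\varepsilon$, so the right-hand side exceeds $1-\varepsilon$; the dual chain with $\nu$ gives a bound below $\varepsilon$. Since these estimates hold for every large $m$ and every $n$, passing to $\liminf_{m,n}$ (resp.\ $\limsup_{m,n}$) and then to the supremum (resp.\ infimum) over $\lambda$ shows that $(x_{mn})$ is slowly oscillating in the sense $(1,0)$.

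With slow oscillation in hand, the second step is immediate: the preceding theorem applies verbatim and yields $x_{mn}\to x$, completing the proof.

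The step I expect to be the main obstacle is the uniformity in $n$. Theorem \ref{boundedtoslowly} is a single-sequence statement, and applying it to each column $(x_{mn})_m$ separately only produces $\liminf_{m}$ for each fixed $n$, whereas the sense-$(1,0)$ definition demands the joint $\liminf_{m,n}$. What rescues the argument is that the q-boundedness hypothesis of Definition \ref{q-bounded} is already uniform in both indices, so the threshold $M_\varepsilon$, and hence the admissible range of $\lambda$, may be chosen independently of $n$. For this reason I would carry out the explicit estimate above rather than quote Theorem \ref{boundedtoslowly} as a black box, so that the uniformity is manifest.
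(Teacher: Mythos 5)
Your proposal is correct and follows essentially the same route as the paper, which derives this theorem by combining the preceding Tauberian theorem for slow oscillation in the sense $(1,0)$ with the implication from q-boundedness of the weighted differences (the paper merely cites Theorem \ref{boundedtoslowly} and Theorem \ref{(1,0)} without writing out details). Your explicit verification that the estimate is uniform in $n$ --- rather than invoking the single-sequence Theorem \ref{boundedtoslowly} columnwise, which would only give a $\liminf$ in $m$ for each fixed $n$ --- is exactly the point the paper glosses over, and your treatment of it is sound.
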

\begin{theorem}
Let $p=(p_j)$ be a nonnegative sequences with $p_0>0$ and $(P_m)$ be regularly varying of positive indice. If $x_{mn}\to x\ (\bar{N},p,*;1,0)$ and sequence $\left\{\frac{P_m}{p_m}(x_{mn}-x_{m-1,n})\right\}$ is q-bounded, then $x_{mn}\to x$.
\end{theorem}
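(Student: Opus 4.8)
The plan is to obtain this theorem as a composition of two results already established in this section, so that no new estimate is needed. The observation driving the proof is that the two hypotheses --- regular variation of $(P_m)$ and q-boundedness of the weighted backward differences --- are precisely what is required to feed, in turn, the slow-oscillation criterion of Theorem \ref{(1,0)} and the Tauberian theorem for slow oscillation in the sense $(1,0)$.

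First, I would record that since $(P_m)$ is regularly varying of positive index, the sequence $p$ belongs to $SV\!A_+$; this is the containment noted after \eqref{varying} (and quoted from \cite{chen}), and it is what licenses the use of the $SV\!A_+$-theorems of this section. Next, I would apply Theorem \ref{(1,0)}: its hypotheses are that $p$ is nonnegative with $p_0>0$, that $(P_m)$ is regularly varying of positive index, and that $\left\{\frac{P_m}{p_m}(x_{mn}-x_{m-1,n})\right\}$ is q-bounded --- all of which are exactly the assumptions we are given. Its conclusion is that $(x_{mn})$ is slowly oscillating in the sense $(1,0)$. Finally, with $p\in SV\!A_+$, with slow oscillation in the sense $(1,0)$ in hand, and with $x_{mn}\to x\ (\bar{N},p,*;1,0)$ by assumption, the Tauberian theorem for slow oscillation in the sense $(1,0)$ (the $(\bar{N},p,*;1,0)$-analogue of Theorem \ref{slowtauber} stated just above) applies and yields $x_{mn}\to x$.

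I do not expect a genuine obstacle here, since the whole argument is a chaining of previously proved implications and the entire analytic content sits inside Theorem \ref{(1,0)} and the $(1,0)$-Tauberian theorem. The only point that must be flagged rather than re-derived is the implication ``$(P_m)$ regularly varying of positive index $\Rightarrow p\in SV\!A_+$,'' which bridges the hypothesis as stated (regular variation) to the form in which the Tauberian theorem requires it (membership in $SV\!A_+$); this is taken from \cite{chen} via the remark following \eqref{varying}.
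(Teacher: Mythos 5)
Your proposal is correct and matches the paper's intended argument exactly: the paper derives this theorem by chaining the $(\bar{N},p,*;1,0)$-analogue of Theorem \ref{slowtauber} with Theorem \ref{(1,0)}, using the remark after \eqref{varying} that regular variation of positive index places $p$ in $SV\!A_+$. No differences to report.
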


\end{document}